\newif\ifmarkauthors
  \definecolor{darkred}{RGB}{139,0,0}
  \definecolor{darkgreen}{RGB}{0,100,0}
  \definecolor{darkmagenta}{RGB}{139,0,139}
  \definecolor{darkorange}{RGB}{190,70,20}
  \def\cbdelete[#1]{}
  \def\cbdelete[#1]{}
\newtheorem{thm}{Theorem}
\newtheorem{cor}[thm]{Corollary}
\newtheorem{lem}[thm]{Lemma}
\newtheorem{prop}[thm]{Proposition}
\newtheorem{rem}[thm]{Remark}
\newtheorem*{summary}{Summary}
\newcommand{\wal}{{\rm wal}}
\newcommand{\icomp}{\mathtt{i}}
\newcommand{\rd}{\,\mathrm{d}}
\newcommand{\bsx}{\boldsymbol{x}}
\newcommand{\bsy}{\boldsymbol{y}}
\newcommand{\bsk}{\boldsymbol{k}}
\newcommand{\bst}{\boldsymbol{t}}
\newcommand{\bsdelta}{\boldsymbol{\delta}}
\newcommand{\bszero}{\boldsymbol{0}}
\newcommand{\RR}{\mathbb{R}}
\newcommand{\EE}{\mathbb{E}}
\newcommand{\NN}{\mathbb{N}}
\newcommand{\cH}{\mathcal{H}}
\newcommand{\ZZ}{\mathbb{Z}}
\newcommand{\cP}{\mathcal{P}}
\begin{document}
\title[Extreme and periodic $L_2$ discrepancy]{Extreme and periodic $L_2$ discrepancy of plane point sets}

\author[A. Hinrichs]{Aicke Hinrichs}
\address{Institute of Analysis\\ Johannes Kepler University Linz\\
Altenberger Strasse 69\\
4040 Linz, Austria}
\email{aicke.hinrichs@jku.at}

\author[R. Kritzinger]{Ralph Kritzinger}
\address{Institute of Financial Mathematics and Applied Number Theory\\ Johannes Kepler University Linz\\
Altenberger Strasse 69\\
4040 Linz, Austria}
\email{ralph.kritzinger@jku.at}

\author[F. Pillichshammer]{Friedrich Pillichshammer}
\address{Institute of Financial Mathematics and Applied Number Theory\\ Johannes Kepler University Linz\\
Altenberger Strasse 69\\
4040 Linz, Austria}
\email{friedrich.pillichshammer@jku.at}

\date{}

\begin{abstract}
In this paper we study the extreme and the periodic $L_2$ discrepancy of plane point sets. The extreme discrepancy is based on arbitrary rectangles as test sets whereas the periodic discrepancy uses ``periodic intervals", which can be seen as intervals on the torus. The periodic $L_2$ discrepancy  is, up to a multiplicative factor, also known as diaphony. The main results are exact formulas for these kinds of discrepancies for the Hammersley point set and for rational lattices. 

We also prove a general lower bound on the extreme $L_2$ discrepancy for arbitrary point sets in dimension $d$, which is of order of magnitude $(\log N)^{(d-1)/2}$, like the standard and periodic $L_2$ discrepancies, respectively. Our results confirm that the extreme and periodic $L_2$ discrepancies of the Hammersley point set are of best possible asymptotic order of magnitude. This is in contrast to the standard $L_2$ discrepancy of the Hammersley point set. Furthermore our exact formulas show that also the $L_2$ discrepancies of the Fibonacci lattice are of the optimal order. 

We also prove that the extreme $L_2$ discrepancy is always dominated by the standard $L_2$ discrepancy, a result that was already conjectured by Morokoff and Caflisch when they introduced the notion of extreme $L_2$ discrepancy in 1994.
\end{abstract}

\subjclass[2020]{Primary 11K38; Secondary 11K36}

\keywords{$L_2$ discrepancy, diaphony, Hammersley point set, rational lattice, lower bounds}

\maketitle

\section{Introduction}
We study several discrepancy notions of two well-known instances of plane point sets, namely the Hammersley point set and rational lattices. The discrepancies are considered with respect to the $L_2$ norm and a variety of  test sets. We define the (standard) $L_2$ discrepancy, the extreme $L_2$ discrepancy and the periodic $L_2$ discrepancy. 

Let $\cP=\{\bsx_0,\bsx_1,\ldots,\bsx_{N-1}\}$ be an arbitrary $N$-element point set in the unit square $[0,1)^2$.  For any measurable subset $B$ of $[0,1]^2$ we define the {\it counting function} $$A(B,\cP):=|\{n \in \{0,1,\ldots,N-1\} \ : \ \bsx_n \in B\}|,$$ i.e., the number of elements from $\cP$ that belong to the set $B$. By the {\it local discrepancy} of $\cP$ with respect to a given measurable ``test set'' $B$  one understands the expression $$A(B,\cP)-N \lambda (B),$$ where $\lambda$ denotes the Lebesgue measure of $B$. A global discrepancy measure is then obtained by considering a norm of the local discrepancy with respect to a fixed class of test sets. Here we restrict ourselves to the $L_2$ norm, but we variegate the class of test sets.

The {\it (standard) $L_2$ discrepancy} uses the class of axis-parallel squares anchored at the origin as test sets. The formal definition is 
$$  L_{2,N}(\cP):=\left(\int_{[0,1]^2}\left|A([\bszero,\bst),\cP)-N\lambda([\bszero,\bst))\right|^2\rd \bst\right)^{\frac{1}{2}},  $$
where for $\bst=(t_1,t_2)\in [0,1]^2$ we set $[\bszero,\bst)=[0,t_1)\times [0,t_2)$ with area $\lambda([\bszero,\bst))=t_1t_2$.

The {\it extreme $L_2$ discrepancy} uses arbitrary axis-parallel rectangles contained in the unit square as test sets. For $\bsx=(x_1,x_2)$ and $\bsy=(y_2,y_2)$ in $[0,1]^2$ and $\bsx \leq \bsy$ let $[\bsx,\bsy)=[x_1,y_1)\times [x_2,y_2)$, where $\bsx \leq \bsy$ means $x_1\leq y_1$ and $x_2\leq y_2$. The extreme $L_2$ discrepancy of $\cP$ is then defined as
$$  L_{2,N}^{\mathrm{extr}}(\cP):=\left(\int_{[0,1]^2}\int_{[0,1]^2,\, \bsx\leq \bsy}\left|A([\bsx,\bsy),\cP)-N\lambda([\bsx,\bsy))\right|^2\rd \bsx\rd\bsy\right)^{\frac{1}{2}}.  $$
Note that the only difference between standard and extreme $L_2$ discrepancy is the use of anchored and arbitrary rectangles in $[0,1]^2$, respectively. The term ``extreme'' is used in order to distinguish this notion of $L_2$ discrepancy from the standard $L_2$ discrepancy and refers to the corresponding nomenclature for $L_{\infty}$ discrepancies (see, e.g.,  \cite[Definition~2.1 and 2.2]{niesiam}).

The {\it periodic $L_2$ discrepancy} uses periodic rectangles as test sets, which are defined as follows: For $x,y\in [0,1]$ set
$$ I(x,y)=\begin{cases}
           [x,y) & \text{if $x\leq y$}, \\
           [0,y)\cup [x,1)& \text{if $x>y$,}
          \end{cases}$$
and for $\bsx,\bsy$ as above we set $B(\bsx,\bsy)=I(x_1,y_1)\times I(x_2,y_2)$.
We define the periodic $L_2$ discrepancy of $\cP$ as
$$  L_{2,N}^{\mathrm{per}}(\cP):=\left(\int_{[0,1]^2}\int_{[0,1]^2}\left|A(B(\bsx,\bsy),\cP)-N\lambda(B(\bsx,\bsy))\right|^2\rd \bsx\rd\bsy\right)^{\frac{1}{2}}.  $$

These discrepancy notions can also be defined for point sets in the $d$-dimensional unit cube $[0,1)^d$ in an obvious way. \\

The standard $L_2$ discrepancy is a well known measure for the irregularity of distribution of point sets in the unit square with a close relation to the integration error of quasi-Monte Carlo rules via a Koksma-Hlawka type inequality (see, for example, \cite{DP10,NW10}). In contrast, the extreme and the periodic $L_2$ discrepancies are often not so familiar.
For this reason we summarize a few facts about these discrepancy notions in the following.

According to~\cite{NW10}, the extreme $L_2$ discrepancy was first considered by Morokoff and Caflisch in~\cite{moro} since it is more symmetric than the standard $L_2$ discrepancy, which prefers the lower left vertex of the unit square. Morokoff and Caflisch could not state a Koksma-Hlawka type inequality for the extreme $L_2$ discrepancy, but later it has been shown that this quantity is the worst-case integration error of a certain space of periodic functions with a boundary condition (see~\cite{NW10} and the proof of Theorem~\ref{pr5} in Section~\ref{propper}).

The notion of periodic $L_2$ discrepancy is known from a paper by Lev~\cite{Lev}, but as a matter of fact, it is just a geometric interpretation of the diaphony according to Zinterhof~\cite{zint} (see Proposition~\ref{pr_dia} in Section~\ref{propper}). Its relation to the integration error of quasi-Monte Carlo rules is well-known, see, e.g., \cite{HOe}.

The celebrated lower bound of Roth~\cite{Roth} states that there exists a $c>0$ such that for every $N$-element point set $\cP$ in $[0,1)^2$ the standard $L_2$ discrepancy satisfies $L_{2,N}(\cP) \ge c \sqrt{1+\log N}$. A general lower bound of the same order of magnitude also holds for the periodic $L_2$ discrepancy (see Corollary~\ref{cor5} in Section~\ref{propper}). In the present paper we adapt the proof of Roth to show that also the extreme $L_2$ discrepancy satisfies a lower bound 
$L_{2,N}^{{\rm extr}}(\cP) \ge c \sqrt{1+\log N}$ (see Theorem~\ref{pr6} in Section~\ref{propper}).

For every $\cP$ it is obviously true that 
\begin{equation} \label{extrsmaller}
L_{2,N}^{{\rm per}}(\cP) \ge L_{2,N}^{{\rm extr}}(\cP).
\end{equation} 
This is because when restricting the range of integration in the definition of periodic $L_2$ discrepancy to $\bsx \le \bsy$, then the test sets are exactly those used for the extreme discrepancy. In~\cite{moro} the authors further conjectured that the extreme $L_2$ discrepancy is smaller than the standard $L_2$ discrepancy. They could not prove a result in this direction, but their conjecture was supported by numerical experiments. We will show that this order relation indeed holds true  (see Theorem~\ref{pr5} in Section~\ref{propper}).  

We mention some further results about extreme and periodic $L_2$ discrepancy: The exact asymptotic behaviour of the average of standard, extreme and periodic $L_2$ discrepancy of  random point sets is given in \cite{hin} and \cite{HW}. See also \cite{gne05} for an upper  bound in case of extreme $L_2$ discrepancy. Bounds on the periodic $L_2$ discrepancy for certain multi-dimensional point sets (Korobov's $p$-sets) can be found in \cite{DHP20}. There the dependence of the bounds on the dimension $d$ is of particular interest. \\

In the present paper we prove exact formulas of the aforementioned $L_2$ discrepancies for Hammersley point sets and for rational lattices. In the next section we present some further information and new results about periodic and extreme $L_2$ discrepancy. There we also prove the already mentioned ``Roth-type" lower bound on extreme $L_2$ discrepancy and the order relation between standard and extreme $L_2$ discrepancy that was already conjectured by Morokoff and Caflisch. The exact discrepancy formulas for Hammersley point sets (Theorem~\ref{thm1}) and for rational lattices (Theorem~\ref{thm2}) will then be presented in Section~\ref{mainr}. Their proofs are given in Sections~\ref{sec:proof}-\ref{sec:proofthm2}. 

\section{More results about periodic- and extreme $L_2$ discrepancy}\label{propper}

For a point set $\cP=\{\bsx_0,\bsx_1,\dots,\bsx_{N-1}\}$ and a real vector $\bsdelta\in [0,1]^d$ the shifted point set $\cP+\bsdelta$ is defined as $\cP+\bsdelta=\{\{\bsx_0+\bsdelta\},\dots,\{\bsx_{N-1}+\bsdelta\}\}$, where $\{\bsx_j+\bsdelta\}$ means that the fractional-part-function $\{x\} = x-\lfloor x\rfloor$ for non-negative
real numbers $x$ is applied component-wise to the vector $\bsx_j+\bsdelta$. We call this kind of shift a {\it geometric shift} - in contrast to the digital shift as explained in Section~\ref{mainr}. The root-mean-square $L_2$ discrepancy of a shifted (and weighted) point set $\cP$ with respect to all uniformly distributed shift vectors $\bsdelta\in [0,1]^d$ is

\begin{equation}\label{rmsgeo}
 \sqrt{\mathbb{E}_{\bsdelta}[(L_{2,N}(\cP+\bsdelta))^2]}=\left(\int_{[0,1]^d}(L_{2,N}(\cP+\bsdelta))^2 \rd\bsdelta\right)^{\frac12}. 
\end{equation}

The following relation between periodic $L_2$ discrepancy and root-mean-square $L_2$ discrepancy of a shifted point set $\cP$ holds (see \cite{DHP20,Lev} for proofs):

\begin{prop}\label{pr1}
For every $N$-element point set $\cP$ in $[0,1)^d$ we have $$L_{2,N}^{{\rm per}}(\cP)= \sqrt{\EE_{\bsdelta}[(L_{2,N}(\cP+\bsdelta))^2]}.$$
\end{prop}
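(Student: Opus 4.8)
The plan is to fix the shift vector first, reinterpret a shifted anchored box as an unshifted periodic box, and then integrate over all shifts, recognising the outcome as the periodic $L_2$ discrepancy by means of a measure-preserving change of variables.

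First I would fix $\bsdelta=(\delta_1,\dots,\delta_d)\in[0,1)^d$ and rewrite the counting function of the shifted set. Denoting by $x_{n,i}$ the $i$-th coordinate of $\bsx_n$, a point $\bsx_n$ is counted in $A([\bszero,\bst),\cP+\bsdelta)$ precisely when $\{x_{n,i}+\delta_i\}\in[0,t_i)$ for every $i$; by elementary modular arithmetic this is equivalent to $x_{n,i}$ lying in the periodic interval $I(x_i,y_i)$, where $x_i$ and $y_i$ are the representatives in $[0,1)$ of $-\delta_i$ and of $t_i-\delta_i$ modulo $1$. This periodic interval has length $t_i$ by construction. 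Consequently $A([\bszero,\bst),\cP+\bsdelta)=A(B(\bsx,\bsy),\cP)$ and $N\prod_{i=1}^d t_i=N\lambda(B(\bsx,\bsy))$, so the whole local discrepancy is carried over unchanged. The one point that needs care here is the wrap-around case $t_i>\delta_i$, in which $I(x_i,y_i)$ genuinely splits into two pieces; this is exactly the second branch in the definition of $I$, so it is already accounted for.

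Next I would unfold the definitions to write
\begin{equation*}
\EE_{\bsdelta}[(L_{2,N}(\cP+\bsdelta))^2]=\int_{[0,1]^d}\int_{[0,1]^d}\left|A(B(\bsx,\bsy),\cP)-N\lambda(B(\bsx,\bsy))\right|^2\rd\bst\,\rd\bsdelta,
\end{equation*}
and change variables from $(\bsdelta,\bst)$ to $(\bsx,\bsy)$. Coordinatewise the map is $(\delta_i,t_i)\mapsto(x_i,y_i)$, which on the torus $(\RR/\ZZ)^2$ is the linear automorphism given by the integer matrix $\left(\begin{smallmatrix}-1&0\\-1&1\end{smallmatrix}\right)$ of determinant $-1$. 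It is therefore an almost-everywhere bijection of $[0,1)^2$ onto itself whose Jacobian has absolute value $1$, so $\rd\bsdelta\,\rd\bst=\rd\bsx\,\rd\bsy$ and $(\bsx,\bsy)$ sweeps out the full domain of integration of the periodic discrepancy. After the substitution the right-hand side becomes exactly $(L_{2,N}^{{\rm per}}(\cP))^2$, and taking square roots gives the claim.

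I expect the main obstacle to be purely the bookkeeping of the first step: verifying rigorously that applying the fractional part to $\bsx_n+\bsdelta$ and testing membership in $[\bszero,\bst)$ really coincides with testing membership of $\bsx_n$ in the product periodic box $B(\bsx,\bsy)$, and that the coordinate map is in fact a measure-preserving bijection of the torus onto itself (in particular surjective, not merely measure-preserving onto a proper subset). Both are elementary once the modular arithmetic is organised, and the measure-zero set on which the interval endpoints wrap does not affect the $L_2$ integrals.
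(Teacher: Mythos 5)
Your proof is correct, and it is self-contained where the paper is not: the paper offers no argument for Proposition~\ref{pr1} at all, deferring to \cite{DHP20,Lev}. Both of your steps hold up. The identity $A([\bszero,\bst),\cP+\bsdelta)=A(B(\bsx,\bsy),\cP)$ with $x_i=\{-\delta_i\}$ and $y_i=\{t_i-\delta_i\}$ is exactly right: the case split $t_i\le\delta_i$ versus $t_i>\delta_i$ matches the two branches in the definition of $I$, and in both cases $\lambda(I(x_i,y_i))=t_i$, so the local discrepancies coincide pointwise. The coordinatewise map $(\delta_i,t_i)\mapsto(x_i,y_i)$ is induced by an integer matrix of determinant $-1$, hence is a genuine measure-preserving bijection of the torus (with explicit inverse $(x,y)\mapsto(\{-x\},\{y-x\})$), so the substitution $\rd\bsdelta\,\rd\bst=\rd\bsx\,\rd\bsy$ and the surjectivity onto the full domain of the periodic discrepancy are justified; the $[0,1]$ versus $[0,1)$ boundary issues are measure zero, as you note. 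In spirit this is the geometric argument behind Lev's ``geometrical interpretation of the diaphony'' \cite{Lev}. A genuinely different, more computational route --- the one implicit in the paper's own proof of Proposition~\ref{pr10} --- is to expand $(L_{2,N}(\cP+\bsdelta))^2$ via Warnock's formula \eqref{warn1}, integrate termwise over $\bsdelta$ using $\int_0^1\{x+\delta\}\rd\delta=\tfrac12$, $\int_0^1\{x+\delta\}^2\rd\delta=\tfrac13$ and $\int_0^1\max\{\{x+\delta\},\{y+\delta\}\}\rd\delta=\tfrac12+|y-x|-(y-x)^2$, and recognize the result as formula \eqref{warn3} for $(L_{2,N}^{{\rm per}}(\cP))^2$. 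Your route requires no closed-form discrepancy formulas and makes the measure-theoretic content transparent in any dimension; the analytic route produces the Warnock-type expression \eqref{warn3} as a by-product, which is what the paper actually needs for its explicit computations.
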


From this relation we can deduce the following general lower bound on the periodic $L_2$ discrepancy of point  sets in $[0,1)^d$:

\begin{cor}\label{cor5}
For every dimension $d$ there exists a quantity $c_d>0$ such that every $N$-element point set $\cP$ in the unit cube $[0,1)^d$ has periodic $L_2$ discrepancy bounded by
$$ L_{2,N}^{\mathrm{per}}(\cP)\geq c_d \, (1+\log{N})^{\frac{d-1}{2}}.$$
\end{cor}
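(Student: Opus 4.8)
The plan is to reduce the statement to Proposition~\ref{pr1}, which identifies the periodic $L_2$ discrepancy with the root-mean-square of the standard $L_2$ discrepancy over all geometric shifts, and then to feed in the $d$-dimensional form of Roth's lower bound. The crucial observation is that a geometric shift preserves the number of points: for each shift vector $\bsdelta \in [0,1]^d$, the set $\cP + \bsdelta$ is again an $N$-element point set in $[0,1)^d$, so any lower bound valid for \emph{all} $N$-element point sets applies to it with the same constant.

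First I would recall the general ($d$-dimensional) version of Roth's theorem~\cite{Roth}: there is a constant $c_d > 0$, depending only on $d$, such that every $N$-element point set $\mathcal{Q}$ in $[0,1)^d$ satisfies $L_{2,N}(\mathcal{Q}) \geq c_d (1+\log N)^{(d-1)/2}$. Applying this uniformly to $\mathcal{Q} = \cP + \bsdelta$ for each fixed $\bsdelta$ gives the pointwise bound $L_{2,N}(\cP + \bsdelta) \geq c_d (1+\log N)^{(d-1)/2}$, with the same $c_d$ for every $\bsdelta$. Squaring this and integrating over $\bsdelta \in [0,1]^d$, and using that the right-hand side is constant in $\bsdelta$ while $[0,1]^d$ has measure one, I obtain
$$\EE_{\bsdelta}\big[(L_{2,N}(\cP + \bsdelta))^2\big] = \int_{[0,1]^d} (L_{2,N}(\cP+\bsdelta))^2 \rd \bsdelta \geq c_d^2 (1+\log N)^{d-1}.$$
Taking square roots and invoking Proposition~\ref{pr1} then yields
$$L_{2,N}^{\mathrm{per}}(\cP) = \sqrt{\EE_{\bsdelta}[(L_{2,N}(\cP+\bsdelta))^2]} \geq c_d (1+\log N)^{(d-1)/2},$$
which is exactly the claimed bound.

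There is essentially no serious obstacle here: the result is a direct consequence of Proposition~\ref{pr1} together with the shift-invariance of the cardinality of $\cP$. The only point requiring a little care is that one must use a form of Roth's bound that holds for \emph{arbitrary} $N$-element point sets (so that it survives the passage to $\cP+\bsdelta$) and in arbitrary dimension $d$; the two-dimensional statement quoted in the introduction does not by itself cover general $d$, but its standard $d$-dimensional generalization is precisely what the argument needs.
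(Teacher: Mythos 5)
Your proposal is correct and follows essentially the same route as the paper: invoke Proposition~\ref{pr1} to write the periodic $L_2$ discrepancy as the root-mean-square of $L_{2,N}(\cP+\bsdelta)$ over geometric shifts, observe that each shifted set is again an $N$-element point set so the $d$-dimensional Roth bound applies uniformly in $\bsdelta$, and conclude. The paper phrases the final step as bounding the mean square from below by the infimum over shifts rather than integrating the pointwise bound, but this is only a cosmetic difference.
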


\begin{proof}
Let $\cP$ be an arbitrary $N$-element point sets $\cP$ in $[0,1)^d$. Then we have
$$ L_{2,N}^{\mathrm{per}}(\cP)=\sqrt{\EE_{\bsdelta}[(L_{2,N}(\cP+\bsdelta))^2]}\geq \inf_{\bsdelta\in [0,1]^d} L_{2,N}(\cP+\bsdelta)\geq c_d \, (1+\log{N})^{\frac{d-1}{2}}, $$
where we used Roth's lower bound on the standard $L_2$ discrepancy.
\end{proof}

Another important fact is that the periodic $L_2$ discrepancy can be expressed in terms of exponential sums.

\begin{prop}\label{pr_dia}
For $\cP=\{\bsx_0,\bsx_1,\ldots,\bsx_{N-1}\}$ in $[0,1)^d$ we have $$(L_{2,N}^{{\rm per}}(\cP))^2=\frac{1}{3^d} \sum_{\bsk \in \ZZ^d\setminus\{\bszero\}} \frac{1}{r(\bsk)^2} \left| \sum_{h=0}^{N-1} \exp(2 \pi \icomp \bsk \cdot \bsx_h)\right|^2,$$ where $\icomp=\sqrt{-1}$ and where for $\bsk=(k_1,\ldots,k_d)\in \ZZ^d$ we set 
\begin{equation}\label{def:rk}
r(\bsk)=\prod_{j=1}^d r(k_j) \ \ \ \mbox{ and } \ \ r(k_j)=\left\{ 
\begin{array}{ll}
1 & \mbox{ if $k_j=0$},\\
\frac{2 \pi |k_j|}{\sqrt{6}} & \mbox{ if $k_j\not=0$.} 
\end{array}\right.
\end{equation}
\end{prop}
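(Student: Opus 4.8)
The plan is to compute $(L_{2,N}^{\mathrm{per}}(\cP))^2$ directly by expanding the local discrepancy function on the $2d$-dimensional torus and applying Parseval's identity. (I phrase everything for general $d$, since the statement is $d$-dimensional.) Regard $\Delta(\bsx,\bsy)=A(B(\bsx,\bsy),\cP)-N\lambda(B(\bsx,\bsy))$ as a function of $(\bsx,\bsy)\in[0,1]^{2d}$. Because $\mathbbm{1}_{B(\bsx,\bsy)}(\bsz)=\prod_{j=1}^d\mathbbm{1}_{I(x_j,y_j)}(z_j)$ and $\lambda(B(\bsx,\bsy))=\prod_{j=1}^d|I(x_j,y_j)|$, the Fourier coefficient of $\Delta$ at a frequency $(\bsm,\boldsymbol{n})\in\ZZ^{2d}$ factorizes over the coordinates, giving
\begin{equation*}
\widehat{\Delta}(\bsm,\boldsymbol{n})=\sum_{h=0}^{N-1}\prod_{j=1}^d c_{m_j,n_j}(x_{h,j})-N\prod_{j=1}^d c^{(0)}_{m_j,n_j},
\end{equation*}
where $c_{m,n}(z)=\int_0^1\int_0^1\mathbbm{1}_{I(x,y)}(z)\,e^{-2\pi\icomp(mx+ny)}\rd x\rd y$ is a one-dimensional building block and $c^{(0)}_{m,n}=\int_0^1 c_{m,n}(z)\rd z$. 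By Parseval, $(L_{2,N}^{\mathrm{per}}(\cP))^2=\sum_{(\bsm,\boldsymbol{n})\in\ZZ^{2d}}|\widehat{\Delta}(\bsm,\boldsymbol{n})|^2$, so everything reduces to evaluating $c_{m,n}(z)$.

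To evaluate $c_{m,n}(z)$ I would first record the pointwise identity $\mathbbm{1}_{I(x,y)}(z)=\mathbbm{1}_{[0,y)}(z)-\mathbbm{1}_{[0,x)}(z)+\mathbbm{1}[x>y]$, which splits the periodic interval into two anchored intervals plus a wrap-around correction. Inserting this and carrying out the elementary integrals yields, according to which of $m,n$ vanish,
\begin{equation*}
c_{0,0}(z)=\tfrac12,\qquad c_{0,n}(z)=\frac{e^{-2\pi\icomp nz}}{2\pi\icomp n}\ (n\neq0),\qquad c_{m,0}(z)=\frac{-e^{-2\pi\icomp mz}}{2\pi\icomp m}\ (m\neq0),
\end{equation*}
while for $m,n\neq0$ one gets $c_{m,n}(z)=\delta_{m+n,0}/(2\pi\icomp m)$. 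The subtle feature here is the wrap-around term: it is exactly what produces the ``anti-diagonal'' frequencies $n=-m$ with a nonzero but $z$-independent coefficient, a phenomenon with no analogue for the standard anchored discrepancy which must be tracked carefully.

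The next step is to observe that $c^{(0)}_{m,n}=\int_0^1 c_{m,n}(z)\rd z$ vanishes whenever exactly one of $m,n$ is zero, so the mean term $N\prod_j c^{(0)}_{m_j,n_j}$ survives only when every coordinate is of ``constant'' type, i.e. $m_j=n_j=0$ or $n_j=-m_j\neq0$; and for precisely those frequencies $\widehat{\Delta}$ vanishes, because each coordinate factor is then $z$-independent and $\sum_h 1=N$. Hence every frequency that contributes to the Parseval sum has $\widehat{\Delta}(\bsm,\boldsymbol{n})=[\text{const}]\cdot\sum_{h=0}^{N-1}e^{-2\pi\icomp\boldsymbol{\ell}\cdot\bsx_h}$, where $\ell_j\in\{m_j,n_j\}$ is the nonzero frequency carried by the $z$-dependent factor in coordinate $j$ (and $\ell_j=0$ when coordinate $j$ is constant). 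I would then re-index the Parseval sum by this effective frequency $\boldsymbol{\ell}$, the value $\boldsymbol{\ell}=\bszero$ being exactly the (vanishing) all-constant case, and for each $\boldsymbol{\ell}\neq\bszero$ sum $|\text{const}|^2$ over all $(\bsm,\boldsymbol{n})$ producing it.

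The final, and genuinely crucial, computation is this resummation, done coordinate by coordinate. For a coordinate with $\ell_j\neq0$ there are exactly two admissible types (the factor may come from $c_{0,\ell_j}$ or from $c_{\ell_j,0}$), each contributing $1/(4\pi^2\ell_j^2)$, for a total of $1/(2\pi^2\ell_j^2)$. For a coordinate with $\ell_j=0$ the constant type $m_j=n_j=0$ contributes $1/4$, while the anti-diagonal types $n_j=-m_j\neq0$ contribute $\sum_{m\neq0}1/(4\pi^2 m^2)=\tfrac{1}{4\pi^2}\cdot\tfrac{\pi^2}{3}=\tfrac1{12}$, so the total is $\tfrac14+\tfrac1{12}=\tfrac13$. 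Comparing with \eqref{def:rk}, these per-coordinate weights are precisely $1/(3\,r(k_j)^2)$, whence their product over $j$ equals $3^{-d}r(\bsk)^{-2}$; since $\big|\sum_h e^{-2\pi\icomp\boldsymbol{\ell}\cdot\bsx_h}\big|=\big|\sum_h e^{2\pi\icomp\boldsymbol{\ell}\cdot\bsx_h}\big|$, writing $\bsk=\boldsymbol{\ell}\neq\bszero$ gives the claimed formula. I expect the hardest part to be the bookkeeping of the anti-diagonal frequencies, so that the innocuous-looking identity $\tfrac14+\tfrac1{12}=\tfrac13$ emerges and correctly reproduces the diaphony weight $r(0)=1$ for the vanishing coordinates.
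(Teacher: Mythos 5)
Your proof is correct, and it is genuinely different from what the paper does: the paper offers no argument at all for Proposition~\ref{pr_dia}, deferring entirely to the literature (``See \cite[p.~390]{HOe}''), where the identity is obtained by starting from the Warnock-type kernel formula \eqref{warn3} and expanding the Bernoulli-polynomial factor $\tfrac12-|u|+u^2=\tfrac13+B_2(|u|)$ into its Fourier series $B_2(\{u\})=\tfrac{1}{2\pi^2}\sum_{k\neq 0}e^{2\pi\icomp ku}/k^2$. You instead work directly from the definition of $L_{2,N}^{\rm per}$, applying Parseval on the $2d$-dimensional torus of test-box parameters $(\bsx,\bsy)$ and computing the building blocks $c_{m,n}(z)$ by hand; I checked all four cases of $c_{m,n}$, the vanishing of $\widehat{\Delta}$ on the all-constant frequencies, and the per-coordinate resummation ($\tfrac14+\tfrac1{12}=\tfrac13$ for $\ell_j=0$, and $2\cdot\tfrac{1}{4\pi^2\ell_j^2}=\tfrac{1}{2\pi^2\ell_j^2}=\tfrac{1}{3r(\ell_j)^2}$ for $\ell_j\neq 0$), and everything matches \eqref{def:rk}. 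What your route buys is a self-contained proof from first principles that never invokes the closed-form kernel expression \eqref{warn3} (which the paper only establishes later, in Proposition~\ref{pr10}); what the literature route buys is brevity, since once \eqref{warn3} is available the diaphony formula is a two-line consequence of the known Fourier expansion of $B_2$. The one point worth making explicit in your write-up is the correct handling of the anti-diagonal frequencies $n_j=-m_j\neq 0$, which you did track: they are exactly what inflates the weight of an inactive coordinate from $\tfrac14$ (the anchored case) to $\tfrac13$, and omitting them is the one place where this computation typically goes wrong.
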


\begin{proof}
See \cite[p.~390]{HOe}. 
\end{proof}

The above formula shows that the periodic $L_2$ discrepancy is - up to a multiplicative factor - exactly the diaphony which is a well-known measure for the irregularity of distribution of point sets and which was introduced by Zinterhof~\cite{zint} in 1976 (see also \cite{DT}). 

From this view point we immediately find an order relation between the standard and the periodic $L_2$ discrepancy in the one-dimensional case.

\begin{cor}\label{co:l2di}
For every $N$-element point set $\cP$ in the unit interval $[0,1)$ we have $$L_{2,N}^{{\rm per}}(\cP) \le \sqrt{2}\, L_{2,N}(\cP).$$  We have equality if $N$ is even and $\cP$ is symmetric, i.e., with every $x_n$ also $1-x_n$ belongs to $\cP$.
\end{cor}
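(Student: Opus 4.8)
The plan is to compare exponential-sum representations of the two discrepancies. Specializing Proposition~\ref{pr_dia} to $d=1$ and using $r(k)^2=2\pi^2k^2/3$ for $k\neq0$, the weight $1/(3\,r(k)^2)$ simplifies to $1/(2\pi^2k^2)$, and combining the terms for $k$ and $-k$ (which carry equal modulus) I would obtain
$$(L_{2,N}^{{\rm per}}(\cP))^2=\frac{1}{\pi^2}\sum_{k=1}^{\infty}\frac{1}{k^2}\Big|\sum_{h=0}^{N-1}e^{2\pi\icomp k x_h}\Big|^2.$$
The key is to produce a matching Fourier formula for the standard $L_2$ discrepancy and then read off the inequality by direct comparison.

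Next I would expand the local discrepancy $\Delta(t)=A([0,t),\cP)-Nt$ in the orthonormal basis $(e^{2\pi\icomp k t})_{k\in\ZZ}$ of $L_2([0,1])$. A direct computation of the Fourier coefficients, using $A([0,t),\cP)=\sum_{h}\mathbbm{1}_{[0,t)}(x_h)$, gives $\widehat{\Delta}(k)=\frac{1}{2\pi\icomp k}\sum_{h=0}^{N-1}e^{-2\pi\icomp k x_h}$ for $k\neq0$ (the constant $-N$ arising from the counting function exactly cancelling the contribution of the linear term $Nt$), together with $\widehat{\Delta}(0)=\frac N2-\sum_{h=0}^{N-1}x_h$. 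Parseval's identity then yields
$$(L_{2,N}(\cP))^2=\Big(\frac N2-\sum_{h=0}^{N-1}x_h\Big)^2+\frac{1}{2\pi^2}\sum_{k=1}^{\infty}\frac{1}{k^2}\Big|\sum_{h=0}^{N-1}e^{2\pi\icomp k x_h}\Big|^2.$$

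Comparing the two displays, the exponential-sum part of $(L_{2,N}^{{\rm per}}(\cP))^2$ is exactly twice that of $(L_{2,N}(\cP))^2$, so
$$(L_{2,N}^{{\rm per}}(\cP))^2=2\,(L_{2,N}(\cP))^2-2\Big(\frac N2-\sum_{h=0}^{N-1}x_h\Big)^2\le 2\,(L_{2,N}(\cP))^2,$$
and taking square roots gives the claimed bound. Equality holds exactly when the non-negative centering term vanishes, that is when $\sum_{h}x_h=N/2$; and if $N$ is even with $\cP$ symmetric, then pairing each $x_h$ with $1-x_h$ gives $\sum_h x_h=N-\sum_h x_h$, whence $\sum_h x_h=N/2$ and equality indeed holds.

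The main obstacle is the derivation of the standard discrepancy formula: one must track the $k=0$ mode carefully, since it is precisely this term --- absent from the periodic formula --- that accounts for the gap between the two discrepancies and pins down the equality case. The interchange of summation and integration needed for Parseval is routine, justified by $\big|\sum_h e^{2\pi\icomp k x_h}\big|\le N$ and $\sum_k k^{-2}<\infty$.
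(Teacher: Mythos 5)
Your proposal is correct and follows essentially the same route as the paper: both rest on Koksma's formula $(L_{2,N}(\cP))^2=\bigl(\sum_{h}(\tfrac12-x_h)\bigr)^2+\tfrac{1}{2\pi^2}\sum_{k\ge1}k^{-2}\bigl|\sum_h e^{2\pi\icomp k x_h}\bigr|^2$ (which the paper cites and notes follows from Parseval, and which you derive explicitly) combined with the $d=1$ specialization of Proposition~\ref{pr_dia}, so that dropping the nonnegative mean term gives the factor $\sqrt{2}$ and its vanishing under symmetry gives the equality case. The only difference is cosmetic: you carry out the Parseval computation that the paper delegates to the literature.
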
 

\begin{proof}
In the one-dimensional case the well-known formula of Koksma (see \cite[p.~110]{kuinie}) establishes a connection between $L_2$ discrepancy and diaphony. This formula follows easily from an application of Parseval's identity to the local discrepancy. From this we have  
\begin{eqnarray}\label{eq:symP}
(L_{2,N}(\cP))^2 & = & \left(\sum_{n=0}^{N-1} \left(\frac{1}{2}-x_n\right)\right)^2 +\frac{1}{2 \pi^2} \sum_{k=1}^{\infty} \frac{1}{k^2}\left| \sum_{h=0}^{N-1} \exp(2 \pi \icomp k x_h)\right|^2\nonumber\\
& \ge & \frac{1}{2 \pi^2} \sum_{k=1}^{\infty} \frac{1}{k^2}\left| \sum_{h=0}^{N-1} \exp(2 \pi \icomp k x_h)\right|^2\\
& = & \frac{1}{2} (L_{2,N}^{{\rm per}}(\cP))^2,\nonumber
\end{eqnarray}
where we used Proposition~\ref{pr_dia} in the last step. The result follows from multiplying by two and taking the square root. For symmetric $\cP$ we have equality in \eqref{eq:symP}, because then $\sum_{n=0}^{N-1} \left(\frac{1}{2}-x_n\right)$ equals 0.
\end{proof}

We now show that the extreme $L_2$ discrepancy is indeed always smaller than the standard $L_2$ discrepancy as conjectured in~\cite{moro}. This is actually implied by the known relationships of the extreme and the standard $L_2$ discrepancy to worst-case errors of quasi-Monte Carlo rules for numerical integration.

\begin{thm}\label{pr5}
For every $N$-element point set $\cP$ in $[0,1)^d$ we have 
$$L_{2,N}^{{\rm extr}}(\cP) \le L_{2,N}(\cP).$$ 
\end{thm}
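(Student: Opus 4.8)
The plan is to use the fact, hinted at in the discussion above, that both the standard and the extreme $L_2$ discrepancy are, up to the factor $N$, worst-case integration errors of the equal-weight quasi-Monte Carlo rule based on $\cP$ in suitable reproducing kernel Hilbert spaces. Writing $\nu=\frac1N\sum_{n=0}^{N-1}\delta_{\bsx_n}-\lambda$ for the signed measure on $[0,1]^d$ associated with $\cP$, the first step is to establish the energy representations
\begin{align*}
(L_{2,N}(\cP))^2 &= N^2\iint_{[0,1]^{2d}} K_{\mathrm{std}}(\bsx,\bsy)\,\rd\nu(\bsx)\,\rd\nu(\bsy),\\
(L_{2,N}^{\mathrm{extr}}(\cP))^2 &= N^2\iint_{[0,1]^{2d}} K_{\mathrm{extr}}(\bsx,\bsy)\,\rd\nu(\bsx)\,\rd\nu(\bsy),
\end{align*}
with the product kernels $K_{\mathrm{std}}(\bsx,\bsy)=\prod_{j=1}^d(1-\max(x_j,y_j))$ and $K_{\mathrm{extr}}(\bsx,\bsy)=\prod_{j=1}^d(\min(x_j,y_j)-x_jy_j)$.

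To obtain these representations I would expand the squared $L_2$ norms in the definitions, writing the local discrepancies as $\sum_n\mathbbm{1}_{[\bszero,\bst)}(\bsx_n)-N\lambda([\bszero,\bst))$ and $\sum_n\mathbbm{1}_{[\bsx,\bsy)}(\bsx_n)-N\lambda([\bsx,\bsy))$ respectively, and integrate out the test-set parameters coordinatewise. In the standard case the one-dimensional integral $\int_0^1\mathbbm{1}_{x<t}\mathbbm{1}_{y<t}\rd t=1-\max(x,y)$ produces $K_{\mathrm{std}}$, while in the extreme case integrating over $\{x\le t\le y\}$ gives $\int_0^1\int_x^1\mathbbm{1}_{x\le a<y}\mathbbm{1}_{x\le b<y}\rd y\rd x=\min(a,b)(1-\max(a,b))=\min(a,b)-ab$, producing $K_{\mathrm{extr}}$. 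A short check that the three arising terms (the double sum, the mean term and the constant) assemble exactly into the stated energies $N^2\iint K\,\rd\nu\,\rd\nu$ completes this step; these are the Warnock-type formulas underlying the worst-case error interpretation referred to in Section~\ref{propper}.

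The crux of the argument is then to show that the kernel difference $D:=K_{\mathrm{std}}-K_{\mathrm{extr}}$ is positive semi-definite. In dimension one this is transparent: since $\max(x,y)+\min(x,y)=x+y$, one computes $D_1(x,y)=(1-\max(x,y))-(\min(x,y)-xy)=(1-x)(1-y)$, a rank-one, manifestly positive semi-definite kernel. In general dimension I would write $K_{\mathrm{std}}=\prod_j(b_j+c_j)$ and $K_{\mathrm{extr}}=\prod_j b_j$ with $b_j=\min(x_j,y_j)-x_jy_j$ and $c_j=(1-x_j)(1-y_j)$, and expand
\[
D=\prod_{j=1}^d(b_j+c_j)-\prod_{j=1}^d b_j=\sum_{\emptyset\neq\mathfrak{u}\subseteq\{1,\dots,d\}}\ \prod_{j\in\mathfrak{u}}c_j\prod_{j\notin\mathfrak{u}}b_j.
\]
Each $b_j$ (a Brownian-bridge covariance) and each $c_j$ is a positive semi-definite kernel in its own coordinate pair, so every summand, being a product of positive semi-definite kernels in distinct variables, is positive semi-definite by the Schur/tensor-product property, and hence so is the finite sum $D$.

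Finally I would conclude by subtracting the two energy representations:
\[
(L_{2,N}(\cP))^2-(L_{2,N}^{\mathrm{extr}}(\cP))^2=N^2\iint_{[0,1]^{2d}} D(\bsx,\bsy)\,\rd\nu(\bsx)\,\rd\nu(\bsy)\ge 0,
\]
because the energy of any signed measure with respect to a positive semi-definite kernel is nonnegative (write $D$ via a feature map $D(\bsx,\bsy)=\langle\Phi(\bsx),\Phi(\bsy)\rangle$, so that the integral equals $\|\int\Phi\,\rd\nu\|^2$). Taking square roots yields $L_{2,N}^{\mathrm{extr}}(\cP)\le L_{2,N}(\cP)$. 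I expect the main obstacle to be the positive semi-definiteness of $D$ in arbitrary dimension: although the one-dimensional identity $D_1=(1-x)(1-y)$ is immediate, passing to the product kernels requires the subset expansion above together with the closure of positive semi-definite kernels under products and sums, and one must also take care that the mean and constant terms in the two Warnock-type formulas are matched correctly so that the difference collapses cleanly to the single energy of $D$.
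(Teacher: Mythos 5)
Your proof is correct, but it takes a genuinely different route from the paper. The paper quotes the identities $N\,e(I,Q,H_d)=L_{2,N}(\cP)$ and $N\,e(I,Q,H_d^{\rm extr})=L_{2,N}^{\rm extr}(\cP)$ from Novak--Wo\'zniakowski, where $H_d$ and $H_d^{\rm extr}$ are the tensor-product Sobolev spaces with kernels $\prod_j\min(1-x_j,1-y_j)$ and $\prod_j(\min(x_j,y_j)-x_jy_j)$, and then finishes in one line: $H_d^{\rm extr}$ is a subspace of $H_d$ carrying the induced norm, so the worst-case error over its (smaller) unit ball can only be smaller. You instead work on the kernel side: you derive the energy representations directly by Fubini (these are exactly the Warnock-type formulas of Proposition~\ref{pr10}, so this step is sound), and you prove the kernel \emph{difference} is positive semi-definite via the clean identity $\min(1-x,1-y)=(\min(x,y)-xy)+(1-x)(1-y)$ together with the subset expansion of the product and closure of positive semi-definite kernels under sums and (tensor) products; nonnegativity of the energy of the signed measure $\nu$ with respect to this difference kernel then gives the inequality. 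The two arguments are dual to each other in the sense of Aronszajn: a norm-decreasing (here isometric) inclusion of reproducing kernel Hilbert spaces is equivalent to positive semi-definiteness of the difference of their kernels, so your kernel-domination argument is the mirror image of the paper's subspace argument. What each buys: the paper's proof is essentially immediate once the worst-case-error identities are accepted as cited facts; yours is more self-contained and elementary, requiring no RKHS worst-case-error machinery at all---only Fubini, the explicit one-dimensional kernel computations, and the Schur/tensor closure properties---at the price of the extra bookkeeping in the subset expansion, which you carry out correctly.
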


\begin{proof}
As already mentioned, we need the relationship between the extreme and the standard $L_2$ discrepancy, respectively, and worst-case errors of quasi-Monte Carlo rules for numerical integration. The quoted facts can all be found in \cite{NW10}.

Recall that the worst-case error $e(I,Q,H(K_d))$ of the quasi-Monte Carlo rule 
$$ Q (f) = \frac{1}{N} \sum_{k=0}^{N-1} f( \bsx_k) $$
for the integration problem
$$ I(f) = \int_{[0,1]^d} f(\bsx) \rd\bsx$$
of functions $f:[0,1]^d \to \RR$ in a reproducing kernel Hilbert space $H(K_d)$ with kernel $K_d: [0,1]^d \times [0,1]^d \to \RR$ is given as
$$
 e\big(I,Q,H(K_d)\big) = \sup_{ \|f\|_{H(K_d)} \le 1} |I(f)-Q(f)|.
$$
A closed formula involving the kernel and the Riesz representer $h_d \in H(K_d)$ of the integration functional $I$ is
$$
 e\big(I,Q,H(K_d)\big)^2  = \| h_d \|_{H(K_d)}^2 - \frac{2}{N} \sum_{k=0}^{N-1} h_d ( \bsx_k) + \frac{1}{N^2}\sum_{k,\ell=0}^{N-1} K(\bsx_k,\bsx_\ell),
$$
see \cite[(9.31)]{NW10}.

We now introduce the relevant reproducing kernel Hilbert spaces. They are Hilbert space tensor products of Sobolev spaces of univariate functions. Let $W_2^1([0,1])$ be the Sobolev space of absolutely continuous functions $f:[0,1] \to\RR$ with weak first derivative $f' \in L_2([0,1])$. Let $H$ be the subspace of all functions $f\in W_2^1([0,1])$ satisfying the boundary condition $f(1)=0$ equipped with the norm $ \|f\|_H = \|f'\|_{L_2}$. Let $H^{{\rm extr}}$ be the subspace of all functions $f\in W_2^1([0,1])$ satisfying the boundary conditions $f(0)=f(1)=0$ equipped with the norm $ \|f\|_{H^{{\rm extr}}} = \|f'\|_{L_2}$. Obviously, $H^{{\rm extr}}$ is the subspace of the 1-periodic functions in $H$. Both $H$ and $H^{{\rm extr}}$ are reproducing kernel Hilbert spaces. The kernels are given as $K(x,y)=\min\{1-x,1-y\}$ for $H$ and $K^{{\rm extr}} (x,y) = \min\{x,y\} - xy$ for $H^{{\rm extr}}$. Denote the $d$-fold Hilbert space tensor products of these spaces by $H_d$ and $H_d^{{\rm extr}}$, respectively. Their kernels $K_d$ and $K_d^{{\rm extr}}$are the $d$-fold tensor products of the corresponding univariate kernels. 

Now, using the above formula for the worst-case error of the integration problem and comparing to the formulas of the standard and extreme $L_2$ discrepancy in Proposition~\ref{pr10} in Section~\ref{sec:proof} below shows that 
$$  N\, e\big(I,Q,H_d\big) = L_{2,N}(\cP)  \qquad \text{and}\qquad N\, e\big(I,Q,H_d^{{\rm extr}} \big) = L_{2,N}^{{\rm extr}}(\cP),$$
where $\cP = \{ \bsx_0, \bsx_1,\dots,\bsx_{N-1}\}$ is the point set used by the quasi-Monte Carlo rule $Q$. 
A complete derivation of the first equation is given in \cite[Section~9.5.1]{NW10}, for the second identity we refer to \cite[Section~9.5.5]{NW10}.

But, since $H_d^{{\rm extr}}$ is a subspace of $H_d$ (with the induced scalar product and norm), the inequality $e\big(I,Q,H_d^{{\rm extr}}) \le e \big(I,Q,H_d\big)$ is obvious from the definition of the worst-case error.
\end{proof}

Next, we show how to adapt the proof of Roth's lower bound for the extreme $L_2$ discrepancy. 

\begin{thm}\label{pr6}
For every dimension $d$ there exists a quantity $c_d>0$ such that
every $N$-element point set $\cP$ in the unit cube $[0,1)^d$ 
has extreme $L_2$ discrepancy bounded by
$$L_{2,N}^{{\rm extr}}(\cP) \ge c_d\, (1+\log N)^{\frac{d-1}{2}}.$$ 
\end{thm}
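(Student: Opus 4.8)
The plan is to adapt Roth's classical argument for the standard $L_2$ discrepancy to the extreme setting. Roth's method is based on constructing a test function via Haar-type building blocks associated with a dyadic decomposition of the cube and then estimating, from below, the $L_2$ inner product of the local discrepancy with this function. The crucial structural feature that makes Roth's proof portable here is that the extreme $L_2$ discrepancy is, up to the factor $N$, the worst-case integration error in the tensor-product space $H_d^{\mathrm{extr}}$ of periodic functions vanishing at both endpoints, as recorded in the proof of Theorem~\ref{pr5}. The functions $H^{\mathrm{extr}}$ are exactly the $1$-periodic absolutely continuous functions on $[0,1]$ with mean-related boundary conditions, so the reproducing-kernel picture already tells us that the extreme discrepancy sees the local discrepancy tested against \emph{periodic} Haar wavelets rather than the ordinary ones.

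Concretely, I would first express $(L_{2,N}^{\mathrm{extr}}(\cP))^2$ as an $L_2$ norm (over the appropriate domain of rectangle corners) of a local discrepancy function $\Delta^{\mathrm{extr}}$, and then introduce the periodic (or ``continuous'') analogue of the Rademacher--Haar functions adapted to the extreme kernel $K^{\mathrm{extr}}(x,y)=\min\{x,y\}-xy$. The key is to build an auxiliary function $F$ as a sum over dyadic boxes at a fixed resolution level (roughly $\log_2 N$ levels) of suitably normalized products of one-dimensional periodic Haar functions, chosen so that $F$ is orthogonal to the point-mass contributions except on boxes that are essentially empty of points. One then proves two estimates: a lower bound $\langle \Delta^{\mathrm{extr}}, F\rangle \gg (\log N)^{(d-1)/2}$ coming from the fact that at the chosen resolution most boxes contain no points and hence contribute their full Lebesgue-measure discrepancy, and an upper bound $\|F\|_{L_2} \ll 1$ from orthogonality of the Haar system. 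Cauchy--Schwarz then yields
\begin{equation*}
L_{2,N}^{\mathrm{extr}}(\cP) \;=\; \|\Delta^{\mathrm{extr}}\|_{L_2} \;\ge\; \frac{\langle \Delta^{\mathrm{extr}}, F\rangle}{\|F\|_{L_2}} \;\ge\; c_d\,(1+\log N)^{\frac{d-1}{2}}.
\end{equation*}

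The main obstacle I anticipate lies in the correct choice of the building-block functions so that the orthogonality and the nonvanishing lower bound both go through in the periodic (extreme) geometry. In Roth's original argument the anchoring at the origin makes the interaction between a Haar function and the point-count term easy to control, whereas here the test sets are arbitrary rectangles, so the local discrepancy depends on two corner variables $\bsx,\bsy$ rather than one, and the relevant orthogonal system must be matched to the boundary conditions $f(0)=f(1)=0$ rather than just $f(1)=0$. I would handle this by replacing each one-dimensional Haar function with its periodized version and checking that, after integrating over the corner variables, the point-dependent part of $\langle \Delta^{\mathrm{extr}}, F\rangle$ vanishes on nonempty boxes while the volume part survives—this is the step where one must verify that the periodic Haar functions have vanishing integral and the right support structure. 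Once that reduction is carried out, the counting argument (at resolution $n\approx\log_2 N$ there are $\gg 2^n$ boxes of volume $\approx 1/N$, most of them empty) and the $\binom{n}{d-1}$-type combinatorial count of dyadic levels producing the $(\log N)^{(d-1)/2}$ factor proceed exactly as in the classical proof.
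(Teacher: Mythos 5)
Your proposal stalls at exactly the point where the real difficulty sits: the pairing $\langle \Delta^{\mathrm{extr}}, F\rangle$ that your Cauchy--Schwarz step requires is never well defined. The extreme discrepancy function $\Delta^{\mathrm{extr}}(\bsx,\bsy)=A([\bsx,\bsy),\cP)-N\lambda([\bsx,\bsy))$ lives on the $2d$-dimensional corner domain $\{(\bsx,\bsy)\in[0,1]^{2d}:\bsx\le\bsy\}$, whereas your $F$ is described as a sum of products of one-dimensional (periodized) Haar functions, i.e.\ a function of only $d$ variables. Replacing Haar functions by periodized versions addresses a boundary-condition issue, not this doubling of variables, so the obstacle you anticipate is misidentified: the conditions $f(0)=f(1)=0$ play no role in the lower-bound proof at all (they matter only for the comparison $L_{2,N}^{\mathrm{extr}}(\cP)\le L_{2,N}(\cP)$ of Theorem~\ref{pr5}). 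There is also a telling slip in the mechanism you describe: the point-dependent part of a Haar coefficient vanishes on \emph{empty} boxes, not on nonempty ones; it is precisely the empty boxes that one keeps, because there the coefficient reduces to the known volume term.

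The paper resolves the two-corner problem by a more elementary device that your outline is missing: fix the lower-left corner $\bsx\in[0,1/2)^2$ and regard $D(\bsy)=A([\bsx,\bsy),\cP)-N\lambda([\bsx,\bsy))$ as a function of $\bsy$ alone, extended by $0$ outside $[1/2,1)^2$. For every \emph{ordinary} dyadic Haar function $h_R$ with $R\subseteq[1/2,1)^2$ and $R\cap\cP=\emptyset$ one has the point-set-independent identity $\langle D,h_R\rangle=-2^{-4}N\lambda(R)^2$; Bessel's inequality over the at least $(m-1)2^{m-3}$ such rectangles of area $2^{-m}\approx 1/N$ gives $\int_{[0,1]^2} D(\bsy)^2\rd\bsy\gtrsim 1+\log N$ \emph{uniformly in} $\bsx$, and integrating over $\bsx\in[0,1/2)^2$ finishes the proof (the extension to general $d$ is as in Roth's argument). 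If you prefer to keep your reproducing-kernel starting point, it can be made rigorous, but not with Haar functions themselves: the identity $L_{2,N}^{\mathrm{extr}}(\cP)=N\,e\big(I,Q,H_d^{\mathrm{extr}}\big)$ calls for a fooling function $f\in H_d^{\mathrm{extr}}$ with $\|f\|_{H_d^{\mathrm{extr}}}\le 1$ and $|I(f)-Q(f)|\gtrsim (\log N)^{(d-1)/2}/N$, which one builds from tent-product bumps supported on empty dyadic boxes (their mixed derivatives are the Haar functions, which is what the norm of $H_d^{\mathrm{extr}}$ measures); such bumps vanish on the box boundaries, so the conditions $f(0)=f(1)=0$ come for free and no periodization is needed. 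As written, however, your argument has a genuine gap at its central step.
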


\begin{proof}
 We assume some familiarity with the proof of Roth in the language of Haar functions as it can be found, e.g., in \cite{bilyk4} or \cite{DHP15}. We only prove the case $d=2$, the extension to general $d$ is done as for Roth's lower bound.
 
A dyadic interval in $[0,1]$ is an interval of the form $I=\big[2^{-m} n , 2^{-m}(n+1) \big)$ with nonnegative integers $m,n$ satisfying $0\le n < 2^m$. 
The Haar function supported on $I$ is the function $h_I:[0,1] \to \RR$ which is $+1$ on the left and $-1$ on the right half of $I$ and $0$ outside of $I$. 
The Haar functions form an orthogonal system in $L_2([0,1])$.

The Haar functions in $[0,1]^2$ are tensor products of the univariate Haar functions. 
A dyadic rectangle in $[0,1]^2$ is a product $R=I \times J$ of two dyadic intervals $I$ and $J$. 
The Haar function supported on $R$ is the function $h_R:[0,1]^2 \to \RR$ given as $h_R(x,y)=h_I(x) h_J(y)$. 
The Haar functions form an orthogonal system in $L_2([0,1]^2)$.

Roth's method for proving an order optimal lower bound for the standard $L_2$ dicrepancy uses the orthogonal expansion of the discrepancy function into a series of Haar functions. To adapt the proof for the extreme $L_2$ discrepancy, we first fix $\bsx\in[0,1/2)^2$ and consider the discrepancy function
$$ D(\bsy) = A\big([\bsx,\bsy),\cP\big)-N \lambda \big([\bsx,\bsy)\big)$$
just as a function of $\bsy \in [1/2,1)^2$. For $\bsy \in [0,1]^2 \setminus [1/2,1)^2$, we define $D(\bsy) = 0$.
The crucial point in Roth's proof as well as in this argument here is that the scalar product of the discrepancy function $D(\bsy)$ with a Haar function $h_R(\bsy)$  does not depend on the point set $\cP$ as long as $R$ does not contain a point of $\cP$. 
In fact, we have
$$ \langle D, h_R \rangle = - 2^{-4} N \lambda (R)^2 \qquad \text{if } R \subseteq [1/2,1)^2 \text{ and } \cP \cap R = \emptyset.$$

We now fix a natural number $m$ satisfying $2^{m-3} \le 2N \le 2^{m-2}$ and consider all dyadic rectangles $R=I\times J$ of area $2^{-m}$. They come in $m+1$ different shapes according to the side length of $R$, i.e., the lengths of $I$ and $J$. There are $2^m$ dyadic rectangles of the same shape tiling the unit square. There are $m-1$ shapes where both side length are at most $1/2$, and one quarter, that is $2^{m-2}$, of the dyadic rectangles $R$ of such a shape satisfy $R \subseteq [1/2,1)^2$. Since $2N \le 2^{m-2}$, at least half of those rectangles also satisfy $\cP \cap R = \emptyset$.

Now Bessel's inequality implies
$$
 \int_{[0,1]^2} D(\bsy)^2 \rd\bsy \ge \sum_R \frac{\langle D, h_R \rangle^2}{\| h_R \|_{L_2}^2},
$$
where the sum is taken over all dyadic rectangles $R$. Using just the dyadic rectangles with area $2^{-m}$ and satisfying $R \subseteq [1/2,1)^2$ as well as $\cP \cap R = \emptyset$, of which there are at least $(m-1) 2^{m-3}$, we obtain that 
$$
 \int_{[0,1]^2} D(\bsy)^2 \rd\bsy \ge (m-1) 2^{m-3}
 \frac{2^{-8} N^2 2^{-4m}}{2^{-m}}
 = 2^{-11} (m-1) 2^{-2m} N^2.
$$ 
Now using $ 2^{-m} N \ge 2^{-4}$ and $m-1 \ge 2 +\log_2 N$ we arrive at
$$
 \int_{[0,1]^2} D(\bsy)^2 \rd\bsy \ge 2^{-19} (2+\log_2N).
$$
Since this holds for any fixed $\bsx \in [0,1/2)^2$, we can finally integrate over all these $\bsx$ and obtain
$$
 L_{2,N}^{{\rm extr}}(\cP)^2 \ge 2^{-21} (2+\log_2N).
$$
Hence the desired result follows.
\end{proof}

In dimension one we have the following surprising relationship between periodic and extreme $L_2$ discrepancy. Whether a corresponding relation also holds in higher dimensions is an open question (see also the brief discussion at the end of Section~\ref{mainr}).

\begin{thm}\label{thm:relperex}
For every $N$-element point set $\cP$ in the unit interval $[0,1)$ we have $$ (L_{2,N}^{{\rm per}}(\cP))^2 = 2 (L_{2,N}^{{\rm extr}}(\cP))^2 .$$
\end{thm}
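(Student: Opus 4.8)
The plan is to exploit the geometric structure of the two discrepancy notions directly, without recourse to the Fourier/diaphony formula. In dimension one the periodic discrepancy integrates over the full square $[0,1]^2$, which I would split along the diagonal into the two triangles $T_{\le} = \{(x,y) : x \le y\}$ and $T_{>} = \{(x,y) : x > y\}$. On $T_{\le}$ we have $I(x,y) = [x,y)$ by definition, so the periodic integrand is literally the extreme integrand; hence the contribution of $T_{\le}$ to $(L_{2,N}^{{\rm per}}(\cP))^2$ is exactly $(L_{2,N}^{{\rm extr}}(\cP))^2$.

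The heart of the argument is the contribution of $T_{>}$. Here $x > y$ and $I(x,y) = [0,y) \cup [x,1)$, which is precisely the complement $[0,1) \setminus [y,x)$ of the ordinary interval $[y,x)$. Consequently $A(I(x,y),\cP) = N - A([y,x),\cP)$ and $\lambda(I(x,y)) = 1 - (x-y)$, so that the local discrepancy satisfies
$$A(I(x,y),\cP) - N\lambda(I(x,y)) = -\big(A([y,x),\cP) - N(x-y)\big).$$
Taking absolute squares kills the sign, and after the harmless relabelling $(y,x) \mapsto (x,y)$ (a measure-preserving reflection across the diagonal carrying $T_{>}$ onto $T_{\le}$, with Jacobian $1$) the integral over $T_{>}$ becomes $\int_{T_{\le}} |A([x,y),\cP) - N(y-x)|^2 \rd x \rd y = (L_{2,N}^{{\rm extr}}(\cP))^2$ as well.

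Adding the two triangular contributions then yields $(L_{2,N}^{{\rm per}}(\cP))^2 = 2\,(L_{2,N}^{{\rm extr}}(\cP))^2$, as claimed. I do not expect a serious obstacle: the diagonal itself has Lebesgue measure zero and may be assigned to either triangle without affecting the integrals, so the only points requiring care are the complementation identity for the counting function and the bookkeeping of the sign, both of which are elementary. As a consistency check one can alternatively confirm the factor $2$ on the Fourier side, combining the diaphony formula of Proposition~\ref{pr_dia} with the Koksma identity underlying Corollary~\ref{co:l2di}; but the geometric argument above is both shorter and more transparent, and it makes clear that the factor $2$ originates solely from the doubling of the integration domain when passing from arbitrary to periodic intervals.
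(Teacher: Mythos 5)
Your proof is correct, and it takes a genuinely different route from the paper. The paper proceeds computationally: it sorts the points, quotes the closed formula $(L_{2,N}^{{\rm extr}}(\cP))^2 = \tfrac{1}{12}+\tfrac{1}{2}\sum_{n,m}\bigl(x_n-x_m-\tfrac{n-m}{N}\bigr)^2$ from Kritzinger--Passenbrunner, derives the Bernoulli-polynomial formula $(L_{2,N}^{{\rm per}}(\cP))^2=\sum_{n,m}B_2(|x_n-x_m|)$ for the periodic discrepancy, expands both into sums involving $\sum_n n x_n$, $\sum_{n,m}(x_n-x_m)^2$, etc., and matches the resulting expressions term by term. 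You instead argue geometrically: the integration square splits along the diagonal; on $\{x\le y\}$ the periodic integrand is by definition the extreme one, and on $\{x>y\}$ the periodic interval $[0,y)\cup[x,1)$ is the complement of $[y,x)$ in $[0,1)$, so its local discrepancy is the negative of that of $[y,x)$ (using $A(I(x,y),\cP)=N-A([y,x),\cP)$ and $\lambda(I(x,y))=1-(x-y)$, both valid since all points lie in $[0,1)$); squaring and reflecting across the diagonal then reproduces the extreme integral a second time. All steps check out, including the measure-zero diagonal. Your argument is shorter, needs no external formula, and isolates the source of the factor $2$; it also makes transparent why the identity is special to $d=1$ — for $d\ge 2$ a periodic box with wrapped coordinates is not the complement of an axis-parallel box, which connects to the open question the paper raises after this theorem. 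What the paper's computational route buys in exchange is the pair of explicit intermediate formulas for $(L_{2,N}^{{\rm extr}})^2$ and $(L_{2,N}^{{\rm per}})^2$ in terms of the point coordinates, which are of independent use.
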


\begin{proof}
Let $\cP=\{x_0,x_1,\ldots,x_{N-1}\}$. We may assume that the points are ordered, i.e., $x_0 \le x_1 \le \ldots \le x_{N-1}$. Easy computation (see also \cite[Eq. (1.3)]{KP}) shows that 
$$(L_{2,N}^{{\rm extr}}(\cP))^2 = \frac{1}{12}+\frac{1}{2} \sum_{n,m=0}^{N-1} \left(x_n -x_m -\frac{n-m}{N}\right)^2.$$ From this formula and since $\sum_{n,m=0}^{N-1} (n-m)^2= N^2(N^2-1)/6$ we obtain
\begin{equation*}
(L_{2,N}^{{\rm extr}}(\cP))^2 = \frac{1}{2} \left(\frac{N^2}{6} + \sum_{n,m=0}^{N-1} (x_n-x_m)^2 -\frac{2}{N} \sum_{n,m=0}^{N-1} (x_n -x_m) (n-m) \right).
\end{equation*}
We have
\begin{eqnarray*}
\sum_{n,m=0}^{N-1} (x_n -x_m) (n-m) & = & \sum_{n,m=0}^{N-1} (n x_n - m x_n -n x_m + m x_m) \\
& = & 2 N \sum_{n=0}^{N-1} n x_n -N(N-1) \sum_{n=0}^{N-1} x_n
\end{eqnarray*}
and hence
\begin{equation}\label{fo:l2exD1}
(L_{2,N}^{{\rm extr}}(\cP))^2 =  \frac{1}{2} \left(\frac{N^2}{6} + \sum_{n,m=0}^{N-1} (x_n-x_m)^2 -4 \sum_{n=0}^{N-1} n x_n + 2(N-1) \sum_{n=0}^{N-1} x_n\right).
\end{equation}

For the periodic $L_2$ discrepancy in dimension one we know (see, e.g., the forthcoming Proposition~\ref{pr10} or \cite[p. 389-390]{HOe}) that   
$$(L_{2,N}^{{\rm per}}(\cP))^2 = \sum_{n,m=0}^{N-1} B_2(|x_n-x_m|),$$ where $B_2(x)=x^2-x+\tfrac{1}{6}$ is the second Bernoulli polynomial. Inserting the formula for $B_2$ we obtain
\begin{equation*}
(L_{2,N}^{{\rm per}}(\cP))^2  = \frac{N^2}{6}+  \sum_{n,m=0}^{N-1}(x_n-x_m)^2 - \sum_{n,m=0}^{N-1} |x_n-x_m|.
\end{equation*}
We have further
\begin{eqnarray*}
\lefteqn{ \sum_{n,m=0}^{N-1} |x_n-x_m|}\\
& = & \sum_{n=0}^{N-1} \sum_{m=0}^n (x_n-x_m)+\sum_{n=0}^{N-1} \sum_{m=n+1}^{N-1}(x_m-x_n)\\
& = & \sum_{n=0}^{N-1} x_n (n+1) - \sum_{n=0}^{N-1} \sum_{m=0}^n x_m + \sum_{n=0}^{N-1} \sum_{m=n+1}^{N-1} x_m - \sum_{n=0}^{N-1} x_n (N-1-n)\\
& = & 2 \sum_{n=0}^{N-1} x_n (n+1)-N \sum_{n=0}^{N-1} x_n - \sum_{m=0}^{N-1} x_m \underbrace{\sum_{n=m}^{N-1} 1}_{=N-m} +\sum_{m=0}^{N-1} x_m  \underbrace{\sum_{n=0}^{m-1} 1}_{=m}\\
& = & 4 \sum_{n=0}^{N-1} n x_n  - 2 (N-1) \sum_{n=0}^{N-1} x_n.
\end{eqnarray*}
Hence 
\begin{equation}\label{fo:l2perD1}
(L_{2,N}^{{\rm per}}(\cP))^2 = \frac{N^2}{6}+  \sum_{n,m=0}^{N-1}(x_n-x_m)^2 -4 \sum_{n=0}^{N-1} n x_n + 2 (N-1) \sum_{n=0}^{N-1} x_n.
\end{equation}
A comparison of \eqref{fo:l2exD1} and \eqref{fo:l2perD1} shows the result.
\end{proof}

Note that Theorem~\ref{thm:relperex} in combination with Corollary~\ref{co:l2di} gives another proof of Theorem~\ref{pr5} for the one-dimensional case.

\begin{summary} \rm
In this section we presented a number of inequalities and relations between the three types of $L_2$ discrepancy. We briefly summarize these relations here: For every $N$-element point set $\cP$ in $[0,1)^d$ we have 
$$L_{2,N}^{{\rm extr}}(\cP) \le L_{2,N}^{{\rm per}}(\cP) \quad \mbox{ and }\quad  L_{2,N}^{{\rm extr}}(\cP) \le L_{2,N}(\cP).$$ Furthermore, there exists a quantity $c_d>0$ such that for every $N$-element point set $\cP$ in $[0,1)^d$ we have 
$$c_d (1+\log N)^{\frac{d-1}{2}} \le L_{2,N}^{{\rm extr}}(\cP).$$ In the one-dimensional case we even know that $$L_{2,N}^{{\rm per}}(\cP) = \sqrt{2} \, L_{2,N}^{{\rm extr}}(\cP) \quad \mbox{ and }\quad L_{2,N}^{{\rm per}}(\cP) \le \sqrt{2}\, L_{2,N}(\cP).$$ 
\end{summary}

\section{Exact discrepancy formulas} \label{mainr}

In this section we present  exact formulas for the $L_2$ discrepancies of Hammersley point sets and of rational lattices. Both of them are well established constructions of point sets in discrepancy theory.

\paragraph{Hammersley point set.} We calculate the extreme and the periodic $L_2$ discrepancy of the 2-dimensional Hammersley point set in base 2, which for $m\in\NN$ is given as the set of $N=2^m$ points
\begin{align*}
    \cH_m=\left\{\left(\frac{t_m}{2}+\dots+\frac{t_1}{2^m},\frac{t_1}{2}+\dots+\frac{t_m}{2^m}\right) \ : \ t_1,\dots,t_m\in\{0,1\}\right\}.
\end{align*}  
The Hammersley point set is the prototype of low-discrepancy point sets whose construction is based on digit representations. Its elements $(x_k,y_k)$ for $k=0,1,\ldots,2^m -1$ can be also written in the form $$x_k=\frac{k}{2^m} \quad \mbox{ and }\quad y_k=\varphi_2(k),$$ where $\varphi_2(k)$ is the van der Corput digit reversal function $\varphi_2(k)=\frac{\kappa_0}{2}+\frac{\kappa_1}{2^2}+\cdots +\frac{\kappa_r}{2^{r+1}}$ whenever $k$ has dyadic expansion $k=\kappa_0+\kappa_1 2+\cdots + \kappa_r 2^r$ with $\kappa_i \in \{0,1\}$. Note that the Hammersley point set is symmetric with respect to the main diagonal in $\RR^2$. Another view point of Hammersley point sets as a special instance of digital nets will be used in Section~\ref{sec:digsh}.

We have the following exact result on the extreme and the periodic $L_2$ discrepancy of the Hammersley point set. For comparison only we also include the formula for the standard $L_2$ discrepancy.
\begin{thm}\label{thm1}
  We have
\begin{eqnarray*}
(L_{2,2^m}(\cH_m))^2 & = & \frac{m^2}{64}+\frac{29m}{192}+\frac38-\frac{m}{2^{m+4}}+\frac{1}{2^{m+2}}-\frac{1}{9\cdot 2^{2m+3}}, \\
(L_{2,2^m}^{\mathrm{extr}}(\cH_m))^2 & = & \frac{m}{64}+\frac{1}{72}-\frac{1}{9\cdot 4^{m+2}}, \mbox{ and }\\
(L_{2,2^m}^{\mathrm{per}}(\cH_m))^2 & = & \frac{m}{16}+\frac{1}{9}+\frac{1}{9\cdot 4^{m+1}}. 
\end{eqnarray*}
\end{thm}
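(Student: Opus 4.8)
The plan is to exploit the fact that both coordinates of a Hammersley point are built from the same binary digits of its index (in reversed order): writing $h=\sum_{j=0}^{m-1}\kappa_j 2^j$ with $\kappa_j\in\{0,1\}$, one has $x_h=\sum_{j=0}^{m-1}\kappa_j 2^{j-m}$ and $y_h=\sum_{j=0}^{m-1}\kappa_j 2^{-j-1}$. Consequently every character sum and every pairwise sum entering the discrepancy formulas factorizes over the $m$ digit positions, so the whole computation reduces to summing elementary geometric and zeta-type series. For the periodic discrepancy I would start from the diaphony representation in Proposition~\ref{pr_dia}; for the standard and extreme discrepancy I would start from the Warnock-type closed formulas of Proposition~\ref{pr10} (whose one-dimensional extreme case already appears in the proof of Theorem~\ref{thm:relperex}), or equivalently from the Haar expansion used in the proof of Theorem~\ref{pr6}.

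For the periodic discrepancy, Proposition~\ref{pr_dia} with $d=2$ expresses $(L_{2,2^m}^{\mathrm{per}}(\cH_m))^2$ as $\tfrac19\sum_{(k_1,k_2)\neq(0,0)}(r(k_1)r(k_2))^{-2}\,|S(k_1,k_2)|^2$, where $S(k_1,k_2)=\sum_{h=0}^{2^m-1}\exp(2\pi\icomp(k_1x_h+k_2y_h))$. Since the exponent is linear in the digits, the sum factorizes as
$$S(k_1,k_2)=\prod_{j=0}^{m-1}\Big(1+\exp\big(2\pi\icomp(k_1 2^{j-m}+k_2 2^{-j-1})\big)\Big),$$
so that $|S(k_1,k_2)|^2$ is a product of factors $|1+e^{2\pi\icomp\theta_j}|^2=2+2\cos(2\pi\theta_j)$, each vanishing precisely when the phase is a half-integer. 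I would split the outer sum into the ranges $k_2=0$, $k_1=0$, and $k_1k_2\neq0$. In the first two ranges each marginal of $\cH_m$ is the equally spaced grid $\{j/2^m\}$, so $S$ is a single geometric sum that is nonzero only when $2^m$ divides the nonzero index; the genuinely two-dimensional contribution comes from $k_1k_2\neq0$. In each range the surviving indices are characterised by their $2$-adic valuations, after which the weights $r(k)^{-2}=6/(4\pi^2k^2)$ turn the remaining sums into series of $\sum 1/k^2$ over fixed residue classes; these evaluate in closed form and produce the constants $1/9$ and $1/(9\cdot4^{m+1})$ together with the linear term $m/16$.

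For the standard and extreme discrepancy I would insert the Hammersley coordinates into the corresponding Warnock-type formula of Proposition~\ref{pr10}. Each such formula is a double sum over pairs $(h,h')$ of an explicit elementary function of $x_h,x_{h'},y_h,y_{h'}$ (for the extreme discrepancy a two-dimensional analogue of the expression $\tfrac1{12}+\tfrac12\sum_{n,m}(x_n-x_m-(n-m)/N)^2$ from Theorem~\ref{thm:relperex}). Because $x_h=h/2^m$ is fixed by $h$ and $y_h=\varphi_2(h)$ by the reversed digits, these double sums again decouple across digit positions and are evaluated by the same geometric-series bookkeeping. The quadratic term $m^2/64$ in the standard case reflects correlations across all pairs of digit levels in the two coordinates; this coupling is suppressed for the extreme (and periodic) discrepancies, which is precisely why those carry only a linear term in $m$ and hence the better order of magnitude emphasised in the abstract.

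The main obstacle I anticipate is purely computational but delicate: carrying out the exact summation of the weighted digit series without error, in particular organising the indices $k_1,k_2$ (respectively the pairs $(h,h')$) according to their binary valuation patterns and then resumming the resulting $\sum 1/k^2$-type series over residue classes to the stated rational multiples of powers of $2$ and the constant $1/9$ (equivalently, tracking the $\pi^2$-cancellations via $\sum_{k\geq1}k^{-2}=\pi^2/6$, which is exactly the normalisation built into $r(k)$). As consistency checks I would verify that the leading coefficients respect the expected tensor scaling between dimensions one and two (the periodic leading term $m/16$ being four times the extreme leading term $m/64$, matching the factor $2^d$ for $d=2$) and that all three closed forms agree with a direct numerical evaluation for small values of $m$.
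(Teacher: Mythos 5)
Your plan for the standard and extreme discrepancies is essentially the paper's own proof: the paper also starts from the Warnock-type formulas of Proposition~\ref{pr10} (in the two-dimensional form of Remark~\ref{2dwarnock}) and reduces everything to closed-form evaluation of digit sums (the ten sums $S_1,\dots,S_{10}$ of Lemma~\ref{formeln}). One caveat: your phrase ``these double sums decouple across digit positions'' is too optimistic. Terms such as $\sum_{k,l}|x_k-x_l|\,|y_k-y_l|$ do \emph{not} factor over digit positions; the paper must condition on the first position (from the left for one coordinate, from the right for the other) at which the digit strings of $k$ and $l$ differ, and this case analysis is the real content of Section~5. That is a matter of carrying out the computation rather than a flaw in the approach, so for these two formulas your outline is sound.

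The genuine gap is in your treatment of the periodic discrepancy. You propose to use Proposition~\ref{pr_dia}, factorize $S(k_1,k_2)=\prod_{j=0}^{m-1}\bigl(1+\exp\bigl(2\pi\icomp(k_1 2^{j-m}+k_2 2^{-j-1})\bigr)\bigr)$, and then claim that ``the surviving indices are characterised by their $2$-adic valuations,'' so that the sum collapses into $\sum 1/k^2$-type series over residue classes. The factorization is correct, but the classification claim is false for the Hammersley net: $|S(k_1,k_2)|^2=4^m\prod_{j}\cos^2\bigl(\pi(k_1 2^{j-m}+k_2 2^{-j-1})\bigr)$ takes many intermediate values, not just $0$ and $4^m$. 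Concretely, for $m=2$ and $(k_1,k_2)=(1,1)$ one gets $S=(1+e^{3\pi\icomp/2})^2=(1-\icomp)^2=-2\icomp$, so $|S|^2=4\notin\{0,16\}$. The clean dichotomy you have in mind --- exponential sum equal to $0$ or $N$, with the nonzero set a union of residue classes --- holds for rational lattices because they are subgroups of the torus, and indeed that is exactly how the paper proves Theorem~\ref{thm2}; but it fails for digital nets, whose natural characters are Walsh functions rather than exponentials (compare the proof of Theorem~\ref{thm3}, which works entirely in the Walsh basis). With exponential characters, both the support and the size of the contributions depend on the interaction of the full binary expansions of $k_1$ and $k_2$ across all $m$ levels, so the ``residue-class resummation'' you describe does not go through, and the closed form $\frac{m}{16}+\frac19+\frac{1}{9\cdot 4^{m+1}}$ cannot be reached along this road as sketched. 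The paper avoids the problem entirely by computing the periodic discrepancy, too, from the Warnock-type formula~\eqref{warn3} and the same sums $S_1,\dots,S_{10}$; to salvage your route you would either have to switch to that formula or redo the analysis with Walsh characters, which is a substantially more involved computation than the geometric-series bookkeeping you anticipate.
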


The result for the standard $L_2$ discrepancy is well-known. A proof can be found, for example, in \cite{HZ,Pill}. The results for the extreme and periodic $L_2$ discrepancy are new. The proofs of these formulas - along with a new proof for the standard $L_2$ discrepancy - will be presented in Section~\ref{sec:proof}. 

An immediate consequence of Theorem~\ref{thm1} is that - in contrast to the standard $L_2$ discrepancy - the extreme and periodic $L_2$ discrepancy of the Hammersley point set are of the optimal order $\sqrt{\log{N}}$, respectively. The $L_2$ discrepancy of the Hammersley point set is only of order $\log{N}$, which is not the optimal order according to the aforementioned lower bound of Roth~\cite{Roth}. Several modifications such as digital shifts or symmetrization are necessary to overcome this defect of the Hammersley point set (see e.g.~\cite{FKP,HZ,HKP14,KP06}), which for the other two notions of $L_2$ discrepancy are not necessary. Considering the fact the periodic $L_2$ discrepancy  can be understood as a root-mean-square $L_2$ discrepancy of shifted point sets (see Proposition~\ref{pr1} in Section~\ref{propper}) and with inequality~\eqref{extrsmaller} in mind, this result does not come unexpected.

Theorem~\ref{thm1} further demonstrates that the standard and the extreme $L_2$ discrepancy are not equivalent in general. This is in contrast to the $L_\infty$ extreme/star discrepancies $D_N(\cP)$ and $D^*_N(\cP)$, which are defined as
$$D_N(\cP)=\sup_{\bsx,\bsy\in[0,1]^2,\, \bsx\leq\bsy} |A([\bsx,\bsy),\cP)-N\lambda([\bsx,\bsy))|$$
and
$$ D_N^*(\cP)=\sup_{\bst\in[0,1]^2} |A([\bszero,\bst),\cP)-N\lambda([\bszero,\bst))|$$
for two-dimensional point sets. For these discrepancy notions we have the almost trivial inequalities $D_N^*(\cP)\leq D_N(\cP)\leq 4D_N^*(\cP)$.

Another obvious implication of Theorem~\ref{thm1} in conjunction with Proposition~\ref{pr1} is the fact that there exists a geometric shift $\bsdelta\in [0,1]^2$ such that the point set $\cH_m+\bsdelta$ achieves the optimal order of $L_2$ discrepancy. In fact, Roth~\cite{Roth4} used geometric shifts (but only in one coordinate) to prove for the first time the existence of point sets in $[0,1)^d$ with the optimal $L_2$ discrepancy rate $(\log{N})^{\frac{d-1}{2}}$. He could show that the average of the $L_2$ discrepancy of higher dimensional versions of the Hammersley point set over all possible shifts achieves this bound; hence it was a probabilistic existence result. In dimension 2, Roth's result has later been derandomized by Bilyk~\cite{bilyk3} who could find an explicit geometric shift $\bsdelta=(\delta,0)\in [0,1]^2$ such that $\cH_m+\bsdelta$ has the optimal order of $L_2$ discrepancy. \\

Since  the periodic $L_2$ discrepancy equals the root-mean-square discrepancy with respect to geometric shifts, we would like to compare the result on $L_{2,2^m}^{\mathrm{per}}(\cH_m)$ with the root-mean-square $L_2$ discrepancy of the Hammersley point set with respect to {\it digital} shifts, which are often studied in this context. 

These kind of shifts are based on digit-wise addition modulo 2. In more detail, for $x,y\in [0,1)$ with dyadic expansions
$x=\sum_{i=1}^{\infty}\frac{\xi_i}{2^i} $ and $ y=\sum_{i=1}^{\infty}\frac{\eta_i}{2^i} $ with digits $\xi_i,\eta_i\in\{0,1\}$ for all $i,j\geq 1$ we define $$x\oplus y:=\sum_{i=1}^{\infty}\frac{\xi_i + \eta_i \pmod{2}}{2^i}.$$ For vectors $\bsx,\bsy \in [0,1)^d$  the digit-wise addition $\bsx \oplus \bsy$ is defined component-wise. 
  
For a point set $\cP=\{\bsx_0,\bsx_1,\dots,\bsx_{N-1}\}$ and a real vector $\bsdelta\in [0,1]^d$ we define the {\it digitally shifted point set} $\cP\oplus \bsdelta$  as $$\cP\oplus \bsdelta=\{\bsx_0\oplus \bsdelta,\bsx_1\oplus \bsdelta,\dots,\bsx_{N-1}\oplus\bsdelta\}.$$ 

The root-mean-square $L_2$ discrepancy of a digitally shifted point set $\cP$ with respect to all uniformly distributed (digital) shift vectors $\bsdelta\in [0,1)^d$ is
\begin{equation}\label{rms:digital} 
\sqrt{\mathbb{E}_{\bsdelta}[(L_{2,N}(\cP\oplus\bsdelta))^2]}=\left(\int_{[0,1]^d}(L_{2,N}(\cP\oplus\bsdelta))^2 \rd\bsdelta\right)^{\frac12}. 
\end{equation}
This is the digital equivalent to the  root-mean-square $L_2$ discrepancy of a geometrically shifted point set $\cP$ given in  \eqref{rmsgeo} and therefore to the periodic $L_2$ discrepancy.

We compute $\EE_{\bsdelta}[(L_{2,N}(\cH_m \oplus \bsdelta))^2]$ and obtain the following result:

\begin{thm}\label{thm3}
For the $2^m$-element Hammersley point set $\cH_m$ we have
\begin{eqnarray*}
\EE_{\bsdelta}[(L_{2,N}(\cH_m \oplus \bsdelta))^2] = \frac{m}{24} + \frac{5}{36}.
\end{eqnarray*}
\end{thm}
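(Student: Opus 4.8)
\textbf{Proof proposal for Theorem~\ref{thm3}.}

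The plan is to compute the mean-square $L_2$ discrepancy of the digitally shifted Hammersley point set by exploiting the fact that digital shifting interacts very cleanly with the dyadic/Walsh structure of $\cH_m$, combined with the root-mean-square averaging over all shift vectors. First I would record a closed formula for $(L_{2,N}(\cP))^2$ valid for an arbitrary point set; the natural choice here is the Warnock-type formula
\begin{equation*}
(L_{2,N}(\cP))^2 = \frac{N^2}{9} - \frac{2N}{2^d}\sum_{n=0}^{N-1}\prod_{j=1}^d (1-x_{n,j}^2) + \sum_{n,\ell=0}^{N-1}\prod_{j=1}^d \bigl(1-\max\{x_{n,j},x_{\ell,j}\}\bigr),
\end{equation*}
in dimension $d=2$. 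The key point is that integrating $(L_{2,N}(\cH_m\oplus\bsdelta))^2$ over $\bsdelta\in[0,1)^2$ means applying $\int_{[0,1)}\cdots\rd\delta$ to each factor after the digital shift $x_{n,j}\mapsto x_{n,j}\oplus\delta_j$, and these integrals factor across the two coordinates and across the two sums because the shift is applied independently in each coordinate.

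Next I would carry out the averaging at the level of single pairs. For the linear term one needs $\int_0^1 (1-(x\oplus\delta)^2)\rd\delta$, and for the quadratic term one needs the two-point average $\int_0^1\int_0^1 \bigl(1-\max\{x\oplus\delta,\ x'\oplus\delta\}\bigr)\rd\delta$ over a common shift $\delta$ (the same $\delta$ appears for both points $x$ and $x'$ in a given coordinate, since the shift is applied to the whole point set). The crucial technical fact is that $\EE_\delta[\max\{x\oplus\delta, x'\oplus\delta\}]$ depends only on the \emph{dyadic} relationship between $x$ and $x'$, namely on the position of the most significant bit in which the dyadic expansions of $x$ and $x'$ differ. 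I expect this average to reduce to a simple expression in the index $\alpha=\alpha(x,x')$ of that leading disagreeing digit, something of the form $\tfrac12 + c\,2^{-\alpha}$ for an explicit constant; the constant term $N^2/9$ and the linear term will contribute the smooth part. This is the standard mechanism by which digital shift-averaging turns geometric quantities into combinatorial ones counting agreement of digits.

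Then I would assemble the coordinatewise averages into the full double sum over pairs $(n,\ell)$. Writing $x_n = n/2^m$ and $y_n=\varphi_2(n)$, the dyadic digits of the two coordinates of the $n$-th Hammersley point are the reversal of one another, so for a pair of indices $n,\ell$ the leading disagreement index in the first coordinate and in the second coordinate are governed by the bits of $n\oplus\ell$ (bitwise) from opposite ends. After the shift-averaging the whole expression becomes a sum over $n,\ell\in\{0,\dots,2^m-1\}$ of a product of two factors, each a function of the position of the top, respectively bottom, differing bit of $n\oplus\ell$. The main obstacle I anticipate is precisely this final combinatorial evaluation: one must count, for each pair $(a,b)$ of ``leading bit from the top'' and ``leading bit from the bottom'' positions, how many pairs $(n,\ell)$ realize it, and then sum the resulting geometric weights. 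I would organize this by substituting $k=n\oplus\ell$ and grouping the $2^m$ values of $k$ according to the locations of their most and least significant nonzero bits, reducing the double sum over $(n,\ell)$ to a weighted sum over $k$ (each $k$ occurring $2^m$ times) and finally to a double geometric sum over the two bit positions. Summing these geometric series should collapse to the claimed $\tfrac{m}{24}+\tfrac{5}{36}$; the linear-in-$m$ term will arise from the $m$ equally-weighted shape classes, exactly as the $(\log N)$-type terms appear in the other Hammersley formulas of Theorem~\ref{thm1}.
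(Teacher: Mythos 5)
Your proposal is correct, and it takes a genuinely different route from the paper. The paper views $\cH_m$ as a digital net with generating matrices $C_1$ (identity) and $C_2$ (the reversal matrix), invokes the Walsh-series formula of Dick and Pillichshammer for the shift-averaged squared discrepancy (adapted from finite-depth to infinite-depth digital shifts), and then uses digital-net duality, $\sum_n \wal_k(x_n)\wal_l(y_n)\neq 0$ iff $C_1^{\top}\vec{k}+C_2^{\top}\vec{l}=\vec{0}$, to reduce everything to counting solutions of this linear system over $\FF_2$, split according to the bit lengths $u,v$ of $k,l$. You instead stay entirely on the ``physical side'': you average Warnock's kernel pointwise over the shift, and your anticipated key fact is indeed true and clean, namely
\begin{equation*}
\EE_{\delta}\bigl[\max\{x\oplus\delta,\,x'\oplus\delta\}\bigr]=\tfrac12+\tfrac12\,2^{-\alpha},
\end{equation*}
where $\alpha$ is the position of the most significant bit in which $x$ and $x'$ differ (this follows since $u=x\oplus\delta$ is uniform and $x'\oplus\delta=u\oplus(x\oplus x')$, so the leading differing bit and the conditional uniformity of the trailing bits do the work). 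Your combinatorial reduction is also sound: for the Hammersley set the two leading-disagreement positions are exactly the most and least significant nonzero bits of $k=n\oplus\ell$, each $k$ is realized by exactly $2^m$ pairs, and the class of $k$ with given extreme bit positions $a\le b$ has size $2^{b-a-1}$ (size $1$ if $a=b$); carrying out the resulting geometric sums does collapse to $\tfrac{m}{24}+\tfrac{5}{36}$, with the constant $N^2/9$ term cancelling against the shift-averaged linear term exactly as you predict. What each approach buys: yours is elementary and self-contained, needing neither Walsh analysis nor the external formula from the Dick--Pillichshammer paper (which the authors must modify via a ``set $m=\infty$'' footnote); the paper's dual-space counting, on the other hand, plugs into standard net machinery and is what lets the authors remark that the same result holds for all $(0,m,2)$-nets over $\FF_2$, a generalization that in your setup would require redoing the XOR-class count for arbitrary generating matrices.
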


The proof of Theorem~\ref{thm3} will be presented in Section~\ref{sec:digsh}. Note that the root-mean-square $L_2$ discrepancy for digitally shifted Hammersley points is about a factor $\sqrt{2/3}$ lower than for geometrially shifted Hammersley points.

\paragraph{Rational lattices.}

We will also calculate the extreme and the periodic $L_2$ discrepancy of rational lattices.  First we introduce {\rm irrational lattices}. Let $\alpha\in\RR$ be an irrational number. Then for $N\in\NN$ we define the point set
$$ \mathcal{A}_N(\alpha):=\left\{\left(\frac{k}{N},\left\{k \alpha\right\}\right) \ : \ k=0,1,\dots,N-1\right\}, $$
where $\{k\alpha\}$ denotes the fractional part of the real $k\alpha$.
Let $\alpha=[a_0;a_1,a_2,\dots]$ be the continued fraction expansion of $\alpha$ and $\frac{p_n}{q_n}$ for $n\in\NN$ be the $n^{{\rm th}}$ convergent of $\alpha$; i.e. $\frac{p_n}{q_n}=[a_0;a_1,\dots,a_n]$.
Further we consider the sets
$$ \mathcal{L}_n(\alpha):=\left\{\left(\frac{k}{q_n},\left\{\frac{k p_{n}}{q_n}\right\}\right)\ : \ k=0,1,\dots,q_n-1\right\}, $$
which are an approximation of the set $\mathcal{A}_N(\alpha)$. We call a point set $\mathcal{L}_n(\alpha)$ a {\it rational lattice}. A special instance of a rational lattice is the {\it Fibonacci lattice} $\mathcal{F}_n$, which is obtained for $\alpha=\frac12(\sqrt{5}+1)$; i.e. the golden ratio. Then $\alpha=[1;1,1,\dots]$, $(p_n,q_n)=(F_{n-1},F_n)$ and
$$ \mathcal{F}_n:=\left\{\left(\frac{k}{F_n},\left\{\frac{k F_{n-1}}{F_n}\right\}\right)\ : \ k=0,1,\dots,F_n-1\right\}, $$
where the Fibonacci numbers are defined recursively via $F_0=F_1=1$ and $F_n=F_{n-1}+F_{n-1}$ for $n\geq 2$. 

We have the following formula for the $L_2$ discrepancies of rational lattices.
\begin{thm}\label{thm2}
Let $\alpha$ be given as above. Then we have
\begin{eqnarray*}
(L_{2,q_n}(\mathcal{L}_n(\alpha))^2 & = & \frac{1}{16 q_n^2}\sum_{r=1}^{q_n-1} \frac{1+2 \cos^2\left(\frac{\pi r p_n}{q_n}\right)}{\sin^2\left(\frac{\pi r}{q_n}\right)\sin^2\left(\frac{\pi rp_n}{q_n}\right)}+\left(\mathcal{D}(p_n,q_n)+\frac{3}{4}\right)^2\\
& & +\frac{1}{18}-\frac{1}{144 q_n^2},\\
 (L_{2,q_n}^{\mathrm{extr}}(\mathcal{L}_n(\alpha)))^2& = &\frac{1}{16q_n^2}\sum_{r=1}^{q_n-1}\frac{1}{\sin^2{\left(\frac{\pi r}{q_n}\right)}\sin^2{\left(\frac{\pi rp_n}{q_n}\right)}}+\frac{1}{72}-\frac{1}{144q_n^2}, \, \text{and} \\
(L_{2,q_n}^{\mathrm{per}}(\mathcal{L}_n(\alpha))^2 & = & \frac{1}{4 q_n^2}\sum_{r=1}^{q_n-1}\frac{1}{\sin^2\left(\frac{\pi r}{q_n}\right)\sin^2\left(\frac{\pi rp_n}{q_n}\right)}+\frac19+\frac{1}{36q_n^2}, 
\end{eqnarray*}
where in the first formula $\mathcal{D}(p,q)$ is the inhomogeneous Dedekind sum $$\mathcal{D}(p,q)=\sum_{k=1}^{q-1} \rho\left(\frac{k}{q}\right) \rho\left(\frac{kp}{q}\right)\quad \mbox{ where }\quad \rho(x)=\frac{1}{2}-\{x\}.$$ 
\end{thm}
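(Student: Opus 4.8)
\emph{Setup.} Write $q=q_n$ and $p=p_n$, so that $\cP=\mathcal{L}_n(\alpha)=\{\bsx_k : k=0,\dots,q-1\}$ with $\bsx_k=(k/q,\{kp/q\})$, and recall that $\gcd(p,q)=1$, so $k\mapsto kp\bmod q$ permutes $\{0,\dots,q-1\}$. The plan is to reduce all three formulas to the single trigonometric sum $\Sigma_q:=\sum_{r=1}^{q-1}\sin^{-2}(\pi r/q)\sin^{-2}(\pi rp/q)$, which appears whenever a sum over the lattice collapses onto the dual lattice $\{(k_1,k_2)\in\ZZ^2 : k_1+k_2p\equiv0\pmod q\}$. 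For the periodic discrepancy I would apply Proposition~\ref{pr_dia} directly: for $\bsk=(k_1,k_2)$ the inner sum $\sum_{k=0}^{q-1}\exp(2\pi\icomp k(k_1+k_2p)/q)$ is geometric and equals $q$ if $k_1+k_2p\equiv0\pmod q$ and $0$ otherwise, so only the dual lattice survives and $(L_{2,q}^{{\rm per}}(\cP))^2=\frac{q^2}{9}\sum_{\bsk\neq\bszero,\,k_1+k_2p\equiv0}r(\bsk)^{-2}$. Splitting off the two coordinate axes ($k_1=0$ or $k_2=0$, which by $\gcd(p,q)=1$ force the other coordinate to be a nonzero multiple of $q$) and resumming the interior by the classical identity $\sum_{\ell\in\ZZ}(\ell+x)^{-2}=\pi^2\sin^{-2}(\pi x)$, applied once in each coordinate, converts the interior into $\frac{9}{4q^4}\Sigma_q$ up to constants, while the axes and the excluded origin supply the remaining constants. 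This yields $(L_{2,q}^{{\rm per}}(\cP))^2=\frac1{4q^2}\Sigma_q+\frac19+\frac1{36q^2}$.

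\emph{Extreme case.} Here I would start from the worst-case error representation established in the proof of Theorem~\ref{pr5},
$$(L_{2,q}^{{\rm extr}}(\cP))^2=q^2\|h_2\|^2-2q\sum_{k=0}^{q-1}h_2(\bsx_k)+\sum_{k,\ell=0}^{q-1}K_2^{{\rm extr}}(\bsx_k,\bsx_\ell),$$
where $K^{{\rm extr}}(x,y)=\min(x,y)-xy$, $h(x)=x(1-x)/2$, $\|h\|^2=1/12$, and $K_2^{{\rm extr}},h_2$ are the twofold tensor products. The key device is the elementary identity $K^{{\rm extr}}(x,y)=\tfrac12 B_2(\{x-y\})+h(x)+h(y)-\tfrac1{12}$, with $B_2$ the second Bernoulli polynomial, which exhibits the translation-invariant part of the kernel. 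Substituting the tensor product of this identity and expanding, the doubly translation-invariant piece $\frac14\sum_{k,\ell}B_2(\{x_{k,1}-x_{\ell,1}\})B_2(\{x_{k,2}-x_{\ell,2}\})$ equals, by the periodic computation above, exactly $\frac14\big((L_{2,q}^{{\rm per}}(\cP))^2-\frac19\big)$ and supplies the main term $\frac1{16q^2}\Sigma_q$. In every remaining term at least one of the indices $k,\ell$ enters only through a factor $B_2(\{x_{k,j}-x_{\ell,j}\})$; summing that free index over a full residue system replaces the factor by its mean $\sum_{r=0}^{q-1}B_2(r/q)=1/(6q)$, killing any $\Sigma_q$-contribution. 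Crucially the $+2q\sum_k h_2(\bsx_k)$ produced by the expansion cancels the explicit $-2q\sum_k h_2(\bsx_k)$ of the worst-case formula, and the surviving elementary sums collapse to the constant $\frac1{72}-\frac1{144q^2}$. (In dimension one this is precisely the cancellation behind Theorem~\ref{thm:relperex}.)

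\emph{Standard case.} I would deduce this from the extreme case through the algebraic relations $K(x,y)=K^{{\rm extr}}(x,y)+(1-x)(1-y)$, $h^{{\rm std}}(x)=h(x)+(1-x)/2$ and $\|h^{{\rm std}}\|^2=\|h\|^2+\tfrac14$, which turn the worst-case formula for $(L_{2,q}(\cP))^2$ into the extreme formula plus corrections built from the separable kernel $(1-x)(1-y)$. Two new phenomena appear. First, the fully separable contribution is $\big(\sum_{k}(1-x_{k,1})(1-x_{k,2})\big)^2$; writing $1-\{t\}=\tfrac12+\rho(t)$ with $\rho(x)=\tfrac12-\{x\}$ and using $\sum_k\rho(x_{k,1})=\sum_k\rho(x_{k,2})=\tfrac12$ together with $\sum_k\rho(k/q)\rho(\{kp/q\})=\mathcal{D}(p,q)$, this assembles, after the $q$-dependent pieces cancel against $q^2\|h^{{\rm std}}\|^2$ and $-2q\sum_k h_2^{{\rm std}}(\bsx_k)$, into the perfect square $(\mathcal{D}(p,q)+\tfrac34)^2$. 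Second, the mixed terms $\tfrac12\sum_{k,\ell}B_2(\{x_{k,1}-x_{\ell,1}\})(1-x_{k,2})(1-x_{\ell,2})$ and its transpose now have both indices tied to both coordinates, so their Fourier expansion again collapses onto the dual lattice; the sawtooth $\rho$ produces cotangents, and $\cot^2=\sin^{-2}-1$ together with $\sum_{r=1}^{q-1}\sin^{-2}(\pi r/q)=(q^2-1)/3$ upgrades the numerator of the main sum from $1$ to $1+2\cos^2(\pi rp/q)$ and fixes the additive constant at $\frac1{18}-\frac1{144q^2}$.

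\emph{Main obstacle.} The resummation onto the dual lattice, common to all three cases, is routine. The real work is the bookkeeping in the standard case: checking that the separable first-moment contribution completes to exactly $(\mathcal{D}(p,q)+\tfrac34)^2$ (the two-dimensional analogue of the Koksma splitting in \eqref{eq:symP}, with the Dedekind sum playing the role of $\sum_n(\tfrac12-x_n)$), and that the mixed terms reassemble into the $2\cos^2$-enhanced numerator. Tracking every $O(1)$ and $O(q^{-2})$ constant through these cancellations is where the argument must be carried out most carefully.
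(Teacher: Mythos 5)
Your proposal is essentially correct, and for the extreme discrepancy it takes a genuinely different route from the paper. For the periodic formula you argue exactly as the paper does: Proposition~\ref{pr_dia}, collapse of the character sums onto the dual lattice $k_1+k_2p_n\equiv 0 \pmod{q_n}$, and resummation via $\sum_{\ell\in\ZZ}(\ell+x)^{-2}=\pi^2/\sin^2(\pi x)$, which is precisely the paper's Lemma~\ref{lemmabil}. For the extreme formula the paper works instead with the Warnock-type expression of Remark~\ref{2dwarnock}: it computes Fourier expansions of $f(x)=x(1-x)$ and $g(x,y)=x+y-2xy-|x-y|$, evaluates the exponential sums $\Sigma_{k_1,k_2}$ over the lattice case by case, and invokes Lemma~\ref{lemmabil} a second time. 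Your kernel identity $K^{\mathrm{extr}}(x,y)=\tfrac12B_2(\{x-y\})+h(x)+h(y)-\tfrac1{12}$ (which is correct) avoids any second Fourier computation: the doubly translation-invariant term reuses the periodic result via $\sum_{k,\ell}B_2B_2=(L^{\mathrm{per}}_{2,q_n})^2-\tfrac19$, every term with a single $B_2$ factor averages over a full residue system to $\sum_{r=0}^{q_n-1}B_2(r/q_n)=1/(6q_n)$, and the $2q_n\sum_k h_2$ terms cancel. I checked the bookkeeping: with $A:=\sum_{k}h(k/q_n)=(q_n^2-1)/(12q_n)$ the non-main terms sum to $\tfrac1{72}-\tfrac1{72q_n^2}$, which together with the $\tfrac1{144q_n^2}$ carried by the periodic main term gives exactly $\tfrac1{72}-\tfrac1{144q_n^2}$. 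Your route is shorter, uses the dual-lattice collapse only once, and in effect explains the relation \eqref{conj} of Remark~\ref{re:factor} for lattices; the paper's route is more self-contained in Fourier language and runs parallel to its periodic computation. One caveat: for the first (standard) formula the paper offers no proof at all, citing \cite[Theorem~6]{bilyk2}, so your sketch there goes beyond the paper; it is structurally plausible, but it is also the one place where your constants are unchecked --- for instance, with the $k=0$ point included one has $\sum_{k=0}^{q_n-1}\rho(k/q_n)\rho(\{kp_n/q_n\})=\mathcal{D}(p_n,q_n)+\tfrac14$ rather than $\mathcal{D}(p_n,q_n)$, and the cotangent computation behind the $1+2\cos^2$ numerator is asserted rather than carried out.
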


The first formula for the $L_2$ discrepancy is \cite[Theorem~6]{bilyk2}. The proofs of  the formulas for the extreme and periodic $L_2$ discrepancy will be given in Section~\ref{sec:proofthm2}.

The case of Fibonacci lattices is a matter of particular interest. Hinrichs and Oetters-hagen~\cite{HOe} minimized the periodic $L_2$ discrepancy over $N$-element point sets in the unit square for small values of $N$. If $N \in \{1,2,3,5,8,13\}$ (all of them Fibonacci numbers), then the obtained unique global minimizer of the periodic $L_2$ discrepancy (modulo geometric shifts and other torus symmetries; see~\cite[Section 3.2]{HOe})  are Fibonacci lattices.

One can show that the term $$\frac{1}{F_n^2}\sum_{r=1}^{F_n-1}\frac{1}{\sin^2\left(\frac{\pi r}{F_n}\right)\sin^2\left(\frac{\pi r F_{n-1}}{F_n}\right)}$$ is of order $n$. Numerical experiments in \cite{bilyk2} indicate that 
\begin{equation} \label{constant}
    \frac{1}{F_n^2}\sum_{r=1}^{F_n-1}\frac{1}{\sin^2\left(\frac{\pi r}{F_n}\right)\sin^2\left(\frac{\pi r F_{n-1}}{F_n}\right)} \approx 0.119257 n.
\end{equation}
A few years later the involved constant on the right hand side of \eqref{constant} was identified to have the explicit expression $\frac{4}{15\sqrt{5}}$ (see~\cite{borda}). Furthermore, it is well-known that $\log F_n$ is of order of magnitude $n$, i.e., $\log F_n \asymp n$. This shows that all considered  $L_2$ discrepancies of the Fibonacci lattice are of optimal order of magnitude with respect to the corresponding Roth-type lower bounds. 
In fact it follows from~\cite[Lemma 7]{bilyk} that in case of extreme and periodic $L_2$ discrepancy the same is true for all irrational $\alpha=[a_0;a_1,a_2,...]$ with bounded partial quotients (i.e. $a_k\leq M$ for some constant $M$ and for all $k\geq 0$). Therefore every rational lattice connected to such an $\alpha$ can be shifted geometrically in a way such that the resulting point set achieves the optimal order of $L_2$ discrepancy. From the same paper it is known that the unshifted lattice $\mathcal{L}_n(\alpha)$ has the optimal order of $L_2$ discrepancy if and only if $\sum_{k=0}^n (-1)^k a_k \leq c \sqrt{n}$ for a constant $c>0$.

\begin{rem}\rm
  
  It follows from Theorem~\ref{thm2} and~\eqref{constant} that 
  $$\liminf_{N\to\infty} \inf_{\#\cP=N}\frac{L_{2,N}^{\mathrm{extr}}(\cP)}{\sqrt{\log{N}}}\leq \eta:=\sqrt{\frac{1}{60\sqrt{5}\log(\frac{\sqrt{5}+1}{2})}}=0.124455\ldots ,$$
  and
  $$ \liminf_{N\to\infty} \inf_{\#\cP=N}\frac{L_{2,N}^{\mathrm{per}}(\cP)}{\sqrt{\log{N}}}\leq 2\eta=0.248910\ldots .$$ 
  Note that the corresponding constants one can derive from the results on the Hammersley point set in Theorem~\ref{thm1} are larger. For the standard $L_2$ discrepancy we have $$ \liminf_{N\to\infty} \inf_{\#\cP=N}\frac{L_{2,N}(\cP)}{\sqrt{\log{N}}}\leq \sqrt{2} \eta= 0.176006\ldots, $$
where this constant is attained by symmetrized Fibonacci lattices; see~\cite{bilyk2}.
\end{rem}

\noindent{\bf Brief discussion of possible relationships between $L_2$ discrepancies.} We point out the following peculiarity, which follows from Theorems~\ref{thm1} and~\ref{thm2}:

\begin{rem}\label{re:factor} \rm
If $\cP$ is either the Hammersley point set $\cH_m$ or a rational lattice $\mathcal{L}_n(\alpha)$, then we have the relation 
\begin{equation}\label{conj}
 (L_{2,N}^{\mathrm{per}}(\cP)^2=4(L_{2,N}^{\mathrm{extr}}(\cP))^2+\frac{1}{18}+\frac{1}{18N^2}, 
\end{equation}
where $N=2^m$ or $N=q_n$, respectively. 
\end{rem}

From Remark~\ref{re:factor} and other observations (e.g. the one-element point set $\cP=\{(0,0)\}$ satisfies \eqref{conj} because, as easily checked, $(L_{2,N}^{\mathrm{per}}(\cP)^2=5/36$ and  $(L_{2,N}^{\mathrm{extr}}(\cP))^2=1/144$) one might conjecture that \eqref{conj} holds for arbitrary $N$-element point sets in the unit square. 

However, let us consider the regular grid $$\Gamma_{m,d}=\left\{0,\frac{1}{m},\ldots,\frac{m-1}{m}\right\}^d$$ consisting of $N=m^d$ points in $[0,1)^d$, where $m \in \NN$. For this point set the $L_2$ discrepancies are easily computed using formulas which were introduced by Koksma \cite{Kok42} and Warnock \cite{warn} (see the forthcoming Proposition~\ref{pr10}). As a result one obtains $$(L_{2,m^d}^{{\rm per}}(\Gamma_{m,d}))^2 = \left(\frac{m^2}{3}+\frac{1}{6}\right)^d - \left(\frac{m^2}{3}\right)^d$$ and $$(L_{2,m^d}^{{\rm extr}}(\Gamma_{m,d}))^2 = \frac{m^{2d}-(m^2-1)^d}{12^d}.$$ 

For $d=1$ we have $$(L_{2,m}^{{\rm per}}(\Gamma_{m,1}))^2 =\frac{1}{6} \quad \mbox{ and } \quad (L_{2,m}^{{\rm extr}}(\Gamma_{m,1}))^2 =\frac{1}{12}$$ and hence we nicely observe the relation from Theorem~\ref{thm:relperex}.  

For $d=2$  we have $$(L_{2,m^2}^{{\rm extr}}(\Gamma_{m,2}))^2 =\frac{2 m^2-1}{144}\quad \mbox{ and } \quad (L_{2,m^2}^{{\rm per}}(\Gamma_{m,2}))^2 = \frac{m^2}{9}+\frac{1}{36}.$$ If $m=1$, then $ \Gamma_{1,2}=\{(0,0)\}$ and \eqref{conj} is still satisfied. But if $m >1$, then the relation \eqref{conj} does {\it not} hold anymore for $\Gamma_{m,2}$. Not even the implied multiplier $4$ complies, because $$\lim_{m \rightarrow \infty} \frac{(L_{2,m^2}^{{\rm per}}(\Gamma_{m,2}))^2}{(L_{2,m^2}^{{\rm extr}}(\Gamma_{m,2}))^2}=8.$$

These observations raise some interesting questions about relationships between periodic and extreme $L_2$ discrepancy.  In particular: Which plane point sets satisfy relation \eqref{conj}?  Are the periodic and extreme $L_2$ discrepancies in arbitrary dimension $d$ equivalent (like for $d=1$ according to Theorem~\ref{thm:relperex})?

\section{The proof of Theorem~\ref{thm1}}\label{sec:proof}
We use the following well known formulas for the standard, extreme and periodic $L_2$ discrepancy of point sets. Although we only need the two-dimensional versions of these formulas in our proofs, we state the results for arbitrary dimension $d$.

\begin{prop}\label{pr10}
Let $\cP=\{\bsx_0,\bsx_1,\dots,\bsx_{N-1}\}$ be a point set in $[0,1)^d$, where we write $\bsx_k=(x_{k,1},\dots,x_{k,d})$ for $k\in\{0,1,\dots,N-1\}$. Then we have
\begin{align} \label{warn1}
  (L_{2,N}(\cP))^2=\frac{N^2}{3^d}-\frac{N}{2^{d-1}}\sum_{k=0}^{N-1}\prod_{i=1}^d (1-x_{k,i}^2)+\sum_{k,l=0}^{N-1}\prod_{i=1}^d \min(1-x_{k,i},1-x_{l,i}),
\end{align}
\begin{align} \label{warn2}
  (L_{2,N}^{\mathrm{extr}}(\cP))^2= & \frac{N^2}{12^d}-\frac{N}{2^{d-1}}\sum_{k=0}^{N-1}\prod_{i=1}^d x_{k,i}(1-x_{k,i})\\
  &+\sum_{k,l=0}^{N-1}\prod_{i=1}^d \left(\min(x_{k,i},x_{l,i})-x_{k,i}x_{l,i}\right).\nonumber
\end{align}
and
\begin{align} \label{warn3}
  (L_{2,N}^{\mathrm{per}}(\cP))^2=-\frac{N^2}{3^d}+\sum_{k,l=0}^{N-1}\prod_{i=1}^d \left(\frac12-|x_{k,i}-x_{l,i}|+(x_{k,i}-x_{l,i})^2\right).
\end{align}
\end{prop}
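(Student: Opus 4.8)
The plan is to treat all three formulas uniformly. Writing the test set as $B=\prod_{i=1}^d B_i$, the counting function $A(B,\cP)=\sum_{k=0}^{N-1}\mathbbm{1}[\bsx_k\in B]$ has indicators that factor as $\mathbbm{1}[\bsx_k\in B]=\prod_{i=1}^d\mathbbm{1}[x_{k,i}\in B_i]$, and the measure factors as $\lambda(B)=\prod_{i=1}^d\lambda(B_i)$. Expanding the square of the local discrepancy gives
\begin{align*}
\big(A(B,\cP)-N\lambda(B)\big)^2=\Big(\sum_{k}\mathbbm{1}[\bsx_k\in B]\Big)^2-2N\lambda(B)\sum_{k}\mathbbm{1}[\bsx_k\in B]+N^2\lambda(B)^2.
\end{align*}
Upon integrating over the relevant parameter region, the product structure of both factors makes every $d$-dimensional integral split into a product of $d$ identical one-dimensional integrals. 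Hence for each discrepancy it suffices to evaluate three univariate integrals: a counting--counting integral in the two coordinates $x_{k,i},x_{l,i}$, a counting--measure cross integral in the single coordinate $x_{k,i}$, and a measure--measure integral that is a pure constant.

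First I would dispose of the standard and extreme cases, which are routine. For the standard discrepancy the parameter is $\bst\in[0,1]^d$, the indicator is $\mathbbm{1}[x_{k,i}<t_i]$ and the side length is $t_i$; the three univariate integrals are $\int_0^1\mathbbm{1}[\max(x_{k,i},x_{l,i})<t_i]\rd t_i=\min(1-x_{k,i},1-x_{l,i})$, the cross integral $\int_{x_{k,i}}^1 t_i\rd t_i=(1-x_{k,i}^2)/2$, and $\int_0^1 t_i^2\rd t_i=1/3$, which assemble into \eqref{warn1}. For the extreme discrepancy the constraint $\bsx\le\bsy$ means $x_i\le y_i$ in each coordinate, so the parameter region is a product of triangles; with the indicator $\mathbbm{1}[x_i\le x_{k,i}<y_i]$ and side length $y_i-x_i$ the counting--counting integral becomes $\int_0^{\min(x_{k,i},x_{l,i})}\int_{\max(x_{k,i},x_{l,i})}^{1}\rd y_i\,\rd x_i=\min(x_{k,i},x_{l,i})-x_{k,i}x_{l,i}$, the cross integral evaluates to $x_{k,i}(1-x_{k,i})/2$, and the measure integral is $\int_0^1\!\int_0^{y_i}(y_i-x_i)^2\rd x_i\rd y_i=1/12$; these give \eqref{warn2}.

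The periodic case carries the real work, and the counting--counting integral is the step I expect to be the main obstacle. The decisive simplification is that on the torus $I(x,y)$ is the counterclockwise arc from $x$ to $y$, so $\lambda(I(x,y))=\{y-x\}$ and $a\in I(x,y)$ if and only if $\{a-x\}<\{y-x\}$. With $a=x_{k,i}$ and $b=x_{l,i}$ this turns the counting--counting integral into
\begin{align*}
\int_{[0,1]^2}\mathbbm{1}[a\in I(x,y)]\,\mathbbm{1}[b\in I(x,y)]\rd x\rd y=1-\int_0^1\max(\{a-x\},\{b-x\})\rd x,
\end{align*}
and evaluating $\int_0^1\max(\{a-x\},\{b-x\})\rd x=\tfrac12+|a-b|-(a-b)^2$ by splitting $[0,1]$ at the two sawtooth breakpoints $a,b$ (assuming $a\le b$ without loss of generality) produces the factor $\tfrac12-|x_{k,i}-x_{l,i}|+(x_{k,i}-x_{l,i})^2$ of \eqref{warn3}. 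The remaining two integrals are easy and both collapse to the constant $1/3$: the measure--measure integral is $\int_{[0,1]^2}\{y-x\}^2\rd x\rd y=1/3$, and the cross integral $\int_{[0,1]^2}\mathbbm{1}[a\in I(x,y)]\{y-x\}\rd x\rd y=\int_0^1\tfrac12\big(1-\{a-x\}^2\big)\rd x=1/3$ is independent of $a$. Consequently the cross term contributes $-2N^2/3^d$ and the measure term $+N^2/3^d$, and these combine to the leading $-N^2/3^d$ in \eqref{warn3}. The only genuinely delicate point anywhere in the argument is the breakpoint case analysis for the periodic counting--counting integral; the rest is bookkeeping.
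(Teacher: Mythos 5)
Your proof is correct; each of the univariate integrals you state checks out, and the assembly into \eqref{warn1}, \eqref{warn2} and \eqref{warn3} is right. For \eqref{warn1} and \eqref{warn2} you carry out the direct integration that the paper simply cites to Koksma, Warnock and Morokoff--Caflisch, so there the content agrees. The genuine difference is in the periodic formula \eqref{warn3}: the paper never integrates over the two-parameter periodic test sets at all, but instead invokes Proposition~\ref{pr1} (the identity $L_{2,N}^{\rm per}(\cP)=\sqrt{\EE_{\bsdelta}[(L_{2,N}(\cP+\bsdelta))^2]}$) and averages Warnock's formula \eqref{warn1} over geometric shifts, which reduces everything to the three shift averages $\int_0^1\{x+\delta\}\rd\delta$, $\int_0^1\{x+\delta\}^2\rd\delta$ and $\int_0^1\max\{\{x+\delta\},\{y+\delta\}\}\rd\delta=\tfrac12+|y-x|-(y-x)^2$. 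You instead work directly on the torus, using the characterization $a\in I(x,y)\iff\{a-x\}<\{y-x\}$, so that for fixed $x$ the variable $\{y-x\}$ is uniform on $[0,1]$ and the $y$-integration is trivial; you are then left with exactly the same max-of-sawtooths integral that the paper evaluates. So the two arguments converge on the same key computation, but yours is self-contained and uniform across all three discrepancies --- it does not presuppose Proposition~\ref{pr1}, which the paper quotes from the literature without proof --- whereas the paper's route is shorter given that machinery and makes the structural reason for the periodic kernel (it is the shift average of the Warnock kernel $\min(1-\cdot\,,1-\cdot)$) explicit.
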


\begin{proof}
The first formula is well known and easily proved by direct integration (see \cite{Kok42,warn}). Sometimes this formula is referred to Warnock \cite{warn} what is historically not entirely correct, since it was already provided by Koksma \cite{Kok42} in 1942 for $d=1$, but using the same proof method as later Warnock \cite{warn} for arbitrary dimension (see also \cite{nie73}).  Also the second formula follows by simple direct integration and can be found in~\cite{warn} and~\cite{moro,NW10}, respectively. The last formula can be found in~\cite{HOe,NW10}, where it was derived in the context of the worst-case error in a certain reproducing kernel Hilbert space. This formula can also be derived more directly from Proposition~\ref{pr1} and Equation~\eqref{warn1}. To this end, we observe that for $x,y \in [0,1]$ we have $$\int_0^1 \{x+\delta\} \rd \delta =\frac{1}{2}, \quad \int_0^1 \{x+\delta\}^2 \rd \delta=\frac{1}{3},$$ and $$\int_0^1 \max\{\{x+\delta\},\{y+\delta\}\} \rd \delta=\frac{1}{2}+|y-x|-(y-x)^2.$$
 This is easy calculation. We just show the third formula. Assume without loss of generality that $0 \le x \le y \le 1$. Then we have 
\begin{eqnarray*}
\lefteqn{\int_0^1 \max\{\{x+\delta\},\{y+\delta\}\} \rd \delta}\\
& = &  \int_0^{1-y} \{y+\delta\}\rd \delta+ \int_{1-y}^{1-x} \{x+\delta\} \rd \delta + \int_{1-x}^1 \{y+\delta\} \rd \delta\\
& = & \int_y^1 u \rd u + \int_{1-(y-x)}^1 u \rd u + \int_{1+(y-x)}^{1+y} (u-1) \rd u.
\end{eqnarray*}
Now the result follows from evaluating the elementary integrals. The formula~\eqref{warn3} follows as well.
\end{proof}

\begin{rem}\rm \label{2dwarnock}
Using the formulas~\eqref{warn1},~\eqref{warn2} and~\eqref{warn3} and regarding the fact that $\min\{x,y\}=\frac12(x+y-|x-y|)$ for $x,y\in\RR$, we find that 
for the standard $L_2$ discrepancy of a two-dimensional point set $\cP=\{(x_k,y_k):k=0,1,\dots, N-1\}$ we have
\begin{align*}
  (L_{2,N}(\cP))^2=&\frac{N^2}{9}-\frac{N}{2}\sum_{k=0}^{N-1}  (1-x_k^2)(1-y_k^2)\\
  &+\frac14\sum_{k,l=0}^{N-1}(2-x_k-x_l-|x_k-x_l|)(2-y_k-y_l-|y_k-y_l|),
\end{align*}
for its extreme $L_2$ discrepancy we have
\begin{eqnarray*}
\lefteqn{(L_{2,N}^{\mathrm{extr}}(\cP))^2}\\
& = & \frac{N^2}{144}-\frac{N}{2}\sum_{k=0}^{N-1}  x_k(1-x_k)y_k(1-y_k)\\
& &+\frac14\sum_{k,l=0}^{N-1}(x_k+x_l-2x_kx_l-|x_k-x_l|)(y_k+y_l-2y_ky_l-|y_k-y_l|)
\end{eqnarray*}
and for its periodic $L_2$ discrepancy we have
 \begin{align*}
  (L_{2,N}^{\mathrm{per}}(\cP))^2=&-\frac{N^2}{9}\\
 & +\sum_{k,l=0}^{N-1}\left(\tfrac12-|x_k-x_l|+(x_k-x_l)^2\right)\left(\tfrac12-|y_k-y_l|+(y_k-y_l)^2\right).
\end{align*}
\end{rem}

The following lemma giving the exact values of various sums involving the components of the Hammersley point set is crucial.

\begin{lem} \label{formeln}
Let $\cH_m=\{(x_k,y_k):k=0,1,\dots, 2^m-1\}$ be the Hammersley point set. Then we have
\begin{eqnarray*}
S_1 & := & \sum\limits_{k=0}^{2^m -1}x_k=\sum\limits_{k=0}^{2^m -1}y_k=\frac{2^m-1}{2},\\
S_2 & := & \sum\limits_{k=0}^{2^m -1}x_k^2=\sum\limits_{k=0}^{2^m -1}y_k^2=\frac{(2^m-1)(2^{m+1}-1)}{6 \cdot 2^m} ,\\
S_3 & := & \sum\limits_{k=0}^{2^m -1}x_ky_k=2^{m-2}+\frac{m}{8}-\frac{1}{2}+\frac{1}{2^{m+2}},\\
S_4 & := & \sum\limits_{k=0}^{2^m -1}x_ky_k^2=\sum\limits_{k=0}^{2^m -1}x_k^2y_k=\frac{(2^m-1)(4^{m+1}+3\cdot 2^m(m-2)+2)}{3 \cdot  2^{2m+3}},\\
S_5 & :=& \sum\limits_{k=0}^{2^m -1}x_k^2y_k^2\\
 &=& \frac{8(2^{2m+1}-3\cdot 2^m+1)^2+9m2^m(4^{m+1}+2^m(m-9)+4)}{9\cdot 2^{3m+5}},\\
S_6 & := & \sum\limits_{k,l=0}^{2^m -1}|x_k-x_l|=\sum\limits_{k,l=0}^{2^m -1}|y_k-y_l|=\frac{4^m-1}{3},\\
S_7 & := & \sum\limits_{k,l=0}^{2^m -1}x_k|y_k-y_l|=\sum\limits_{k,l=0}^{2^m -1}y_k|x_k-x_l|=\frac{(2^m-1)^2(2^m+1)}{6\cdot 2^m},\\
S_8 & := & \sum\limits_{k,l=0}^{2^m -1}x_k^2|y_k-y_l|=\sum\limits_{k,l=0}^{2^m -1}y_k^2|x_k-x_l|\\ 
& = & \frac{16(2^m-1)^2(2^{2m+1}+2^m-1)+9m(m-1)4^m}{9\cdot 2^{2m+5}} ,\\
S_9 & := & \sum\limits_{k,l=0}^{2^m -1}x_kx_l|y_k-y_l|=\sum\limits_{k,l=0}^{2^m -1}y_ky_l|x_k-x_l|\\ & =& \frac{8(3\cdot 16^m-4^m-6\cdot 8^m+3\cdot 2^{m+1}-2)-3m4^m(3m+1))}{9\cdot 2^{2m+5}},\\
S_{10} & := & \sum\limits_{k,l=0}^{2^m -1}|x_k-x_l||y_k-y_l|=\frac{8(4^m-1)+9m^2+3m}{72}.
\end{eqnarray*}
\end{lem}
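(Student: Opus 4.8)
The plan is to express everything through the binary digits of the index. Write $N=2^m$ and, for $k=\sum_{i=0}^{m-1}\kappa_i 2^i$ with $\kappa_i\in\{0,1\}$, record that $x_k=\sum_{i=0}^{m-1}\kappa_i 2^{i-m}$ and $y_k=\varphi_2(k)=\sum_{i=0}^{m-1}\kappa_i 2^{-i-1}$, so both coordinates are linear forms in the \emph{same} digits, only with reversed weights. Since $\varphi_2$ is a bijection of $\{0,1,\dots,N-1\}$ onto itself, the multiset $\{y_0,\dots,y_{N-1}\}$ equals $\{0,\tfrac1N,\dots,\tfrac{N-1}N\}$; this instantly reduces $S_1,S_2$ to the power sums $\sum_{k=0}^{N-1}k$ and $\sum_{k=0}^{N-1}k^2$ and explains the coincidence of the $x$- and $y$-versions.

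For the remaining single sums $S_3,S_4,S_5$ I would substitute the digit expansions into the products $x_ky_k$, $x_k^2y_k$ and $x_k^2y_k^2$, multiply out, and sum over the cube $\{0,1\}^m$. The sum over the cube of any monomial $\prod_{i\in T}\kappa_i$ equals $2^{m-|T|}$ (using $\kappa_i^2=\kappa_i$ for repeated indices), so each $S_j$ collapses to a combination of finite geometric sums $\sum_i 2^{i-m}$, $\sum_i 2^{-i-1}$ and their pairwise products, which evaluate in closed form. For example this already yields $S_3=2^{m-2}+\tfrac m8-\tfrac12+2^{-m-2}$; the computations for $S_4$ and $S_5$ are the same in spirit, with cubic and quartic digit monomials, and are mechanical if somewhat lengthy.

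The key device for the double sums is the identity
$$\sum_{l=0}^{N-1}|y_k-y_l|=N\,y_k^2-(N-1)\,y_k+\frac{N-1}{2},$$
which holds because, for fixed $k$, $\{y_l\}_l$ runs through all of $\{0,\tfrac1N,\dots,\tfrac{N-1}N\}$, so the inner sum is the elementary $\tfrac1N\sum_{j=0}^{N-1}|Ny_k-j|$. Multiplying this identity by $1$, by $x_k$ and by $x_k^2$ and summing over $k$ expresses $S_6$, $S_7$ and $S_8$ as linear combinations of the already-computed quantities $S_1,\dots,S_5$ (for instance $S_7=N S_4-(N-1)S_3+\tfrac{N-1}{2}S_1$ and $S_8=N S_5-(N-1)S_4+\tfrac{N-1}{2}S_2$), and the $x\leftrightarrow y$ symmetry of $\cH_m$ yields the stated equality of the two versions of each of $S_6,S_7,S_8$.

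The hard part is $S_9=\sum_{k,l}x_kx_l|y_k-y_l|$ and $S_{10}=\sum_{k,l}|x_k-x_l||y_k-y_l|$, where the single-variable trick fails: in $S_9$ the factor $x_kx_l$ couples the two indices, and in $S_{10}$ a second absolute value appears. Here I would expand $|y_k-y_l|$ (and, for $S_{10}$, also $|x_k-x_l|=|k-l|/N$) by conditioning on the positions at which the digit strings of $k$ and $l$ differ: the sign of $y_k-y_l$ is governed by the \emph{lowest} differing bit and that of $x_k-x_l$ by the \emph{highest}, so on each such event the absolute value becomes a signed linear form in the digits and the remaining summation again reduces to geometric sums over digit positions. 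Equivalently one can run a recursion in $m$ using the self-similar splitting of $\cH_m$ into two scaled copies of $\cH_{m-1}$. Either way, the $2^{-m}$ offsets produced by this decomposition, together with the many monomial types that must be tracked, make $S_9$ and $S_{10}$ the genuine computational obstacle; everything else is either elementary or follows from the reduction above.
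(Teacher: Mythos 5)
Your plan is correct, and for the genuinely hard sums it coincides with the paper's own proof: the paper likewise works with the binary digits, evaluates $S_3,S_4,S_5$ by expanding digit monomials over $\{0,1\}^m$ (its bookkeeping with ``pairwise different'' index tuples is exactly your observation that $\sum_{\kappa\in\{0,1\}^m}\prod_{i\in T}\kappa_i=2^{m-|T|}$), and computes $S_9$ and $S_{10}$ by precisely the conditioning you describe --- the sign of $y_k-y_l$ is fixed by the lowest differing digit, that of $x_k-x_l$ by the highest, after which everything collapses to geometric sums. Be aware, though, that for $S_9$ and $S_{10}$ you have stated a strategy rather than carried it out; this is where the paper spends most of its effort, so your ``mechanical if somewhat lengthy'' conceals the bulk of the actual work (it does go through, as the paper's execution shows, so nothing in your plan would fail). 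Where you genuinely depart from the paper is $S_6,S_7,S_8$: the paper treats $S_7$ and $S_8$ by the same digit-expansion machinery as $S_9$, whereas you exploit that $\{y_l\}_l$ is the full equidistant set $\{j/N:\,j=0,\dots,N-1\}$ to obtain the closed form $\sum_{l}|y_k-y_l|=Ny_k^2-(N-1)y_k+\tfrac{N-1}{2}$, whence $S_6=NS_2-(N-1)S_1+\tfrac{N(N-1)}{2}$, $S_7=NS_4-(N-1)S_3+\tfrac{N-1}{2}S_1$ and $S_8=NS_5-(N-1)S_4+\tfrac{N-1}{2}S_2$, with the two versions of each sum identified via the diagonal symmetry of $\cH_m$. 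These reductions do reproduce the stated values (e.g.\ they give $S_6=\tfrac{4^m-1}{3}$ and $S_7=\tfrac{(2^m-1)^2(2^m+1)}{6\cdot 2^m}$), and they are a real simplification: they eliminate two of the lengthy digit computations and make transparent why $S_6,S_7,S_8$ are determined by the single sums $S_1,\dots,S_5$. In short: same route as the paper for $S_1$--$S_5$, $S_9$, $S_{10}$ (with the last two only sketched), and a cleaner, more elementary route for $S_6$--$S_8$.
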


We defer the technical proofs of these formulas to the next section. We are ready to prove the discrepancy formulas for the Hammersley point set:

\begin{proof}[Proof of Theorem~\ref{thm1}]
We expand the formulas for $(L_{2,2^m}(\cH_m))^2$,  $(L_{2,2^m}^{\mathrm{extr}}(\cH_m))^2$ and $(L_{2,2^m}^{\mathrm{per}}(\cH_m))^2$ as given in Remark~\ref{2dwarnock} and express them in terms of the sums which appear in Lemma~\ref{formeln}. We obtain
\begin{align*}
  (L_{2,N}(\cH_m))^2=&\frac{11\cdot 4^m}{18}-\frac{2^m}{2} (S_5-2S_2)\\
  &+\frac14(-2^{m+3}S_1+2^{m+1}S_3+2S_1^2-4S_6+4S_7+S_{10}),
\end{align*}
\begin{align*}
  (L_{2,N}^{\mathrm{extr}}(\cH_m))^2=&\frac{4^m}{144}-\frac{2^m}{2} (S_3-2S_4+S_5)\\
  &+\frac14(2^{m+1}S_3+2S_1^2-8S_1S_3+4S_3^2-4S_7+4S_9+S_{10}).
\end{align*}
and
\begin{align*}
  (L_{2,N}^{\mathrm{per}}(\cH_m))^2=&\frac{5\cdot 4^m}{36}-4S_8+4S_9-S_6+2^{m+1}S_2-2S_1^2 \\
  &+2^{m+1}S_5-8S_1S_4+4S_3^2+2S_2^2+S_{10}.
\end{align*}
The remaining trivial task is to insert the expressions for the sums $S_i$, $1\leq i \leq 10$, as given in Lemma 2.
\end{proof}

\section{The proof of Lemma~\ref{formeln}}

\paragraph{{\bf Calculation of $S_1$, $S_2$ and $S_6$.}}
We have
$$S_1=\sum_{k=0}^{2^m-1}\frac{k}{2^m}$$
and
$$S_2=\sum_{k=0}^{2^m-1}\left(\frac{k}{2^m}\right)^2$$
as well as
$$ S_6=\frac{2}{2^m}\sum_{k=1}^{2^m-1}\sum_{l=0}^{k-1}(k-l), $$
which yields the results for these sums.

\paragraph{{\bf Calculation of $S_3$, $S_4$ and $S_5$.}}
Since the proofs for the formulas of these sums are very similar, we only sketch the proof of the evaluation of the most complicated sum $S_5$. We have
\begin{align*}
    S_5=& \sum_{t_1,\dots,t_m=0}^{1}\left(\sum_{j_1=1}^{m}\frac{t_{j_1}}{2^{m+1-j_1}}\right)^2\left(\sum_{j_2=1}^m \frac{t_{j_2}}{2^{j_2}}\right)^2\\
    =&\sum_{a,b,c,d=1}^m \frac{1}{2^{2m+2-a-b+c+d}}\sum_{t_1,\dots,t_m=0}^{1}t_at_bt_ct_d \\
    =&\sum_{a,b,c,d=1,\text{ p.d.}}^m \frac{2^{m-4}}{2^{2m+2-a-b+c+d}}+\sum_{\substack{a,c,d=1, \text{ p.d.}\\a=b}}^{m}\frac{2^{m-3}}{2^{2m+2-2a+c+d}}\\
    &+4\sum_{\substack{a,b,d=1, \text{ p.d.}\\a=c}}^{m}\frac{2^{m-3}}{2^{2m+2-b+d}}+\sum_{\substack{a,b,c=1, \text{ p.d.}\\c=d}}^{m}\frac{2^{m-3}}{2^{2m+2-a-b+2c}}\\
    & +\sum_{\substack{a,b=1, \text{ p.d.}\\a=b, c=d}}^{m}\frac{2^{m-2}}{2^{2m+2-2a+2c}}+2\sum_{\substack{a,b=1, \text{ p.d.}\\a=c, b=d}}^{m}\frac{2^{m-2}}{2^{2m+2}} \\
    &+ 4\sum_{\substack{a,d=1, \text{ p.d.}\\a=b=c}}^{m}\frac{2^{m-2}}{2^{2m+2-a+d}}+\sum_{\substack{a=1\\a=b=c=d}}^{m}\frac{2^{m-1}}{2^{2m+2}},
\end{align*}
where ``p.d.'' stands for ``pairwise different''. For the first sum in the last expression we obtain
\begin{eqnarray*}
\lefteqn{ \sum_{a,b,c,d=1,\text{ p.d.}}^m \frac{2^{m-4}}{2^{2m+2-a-b+c+d}}}\\
& =&\frac{1}{2^{m+6}}\bigg( \sum_{a,b,c,d=0}^{1}2^{a+b-c-d}-\sum_{\substack{a,c,d=1\text{ p.d.}\\a=b}}^m2^{2a-c-d}\\
& &-\sum_{\substack{a,b,c=1\text{ p.d.}\\c=d}}^m2^{a+b-2c} -4\sum_{\substack{a,b,d=1\text{ p.d.}\\a=c}}^m2^{b-d} 
    -\sum_{\substack{a,c=1\text{ p.d.}\\a=b,c=d}}^m 2^{2a-2c} \\
& &-2\sum_{\substack{a,b=1\text{ p.d.}\\a=c,b=d}}^m 1-4\sum_{\substack{a,d=1\text{ p.d.}\\a=b=c}}^m 2^{a-d}-\sum_{\substack{a=1\\a=b=c=d}}^{m}1\bigg).
\end{eqnarray*}
The calculation of these sums is straight-forward. The remaining summands in the expression for $S_5$ can be computed analogously. This leads to the final result.

\paragraph{{\bf Calculation of $S_7$, $S_8$ and $S_9$.}}
These sums can be treated simililarly. Therefore we will only show how to evaluate the probably most complicated sum $S_9$. We write this sum in the following way:
\begin{align*}
    S_9=&\sum_{\substack{t_1^{(k)},\dots,t_m^{(k)},  t_1^{(l)},\dots,t_m^{(l)}=0 }}^{1}\left(\sum_{j_1=1}^m \frac{t_{j_1}^{(k)}}{2^{m+1-j_1}}\right)\left(\sum_{j_2=1}^m \frac{t_{j_2}^{(l)}}{2^{m+1-j_2}}\right)\left|\sum_{j_3=1}^{m}\frac{t_{j_3}^{(k)}-t_{j_3}^{(l)}}{2^{j_3}}\right| \\
    =& \sum_{r=0}^{m-1}\sum_{\substack{t_1^{(k)},\dots,t_m^{(k)}, t_1^{(l)},\dots,t_m^{(l)}=0\\t_i^{(k)}=t_i^{(l)}\forall i=1,\dots,r,\,t_{r+1}^{(k)}\neq t_{r+1}^{(l)} }}^{1}\left(\sum_{j_1=1}^m \frac{t_{j_1}^{(k)}}{2^{m+1-j_1}}\right)\left(\sum_{j_2=1}^m \frac{t_{j_2}^{(l)}}{2^{m+1-j_2}}\right)\\
    & \hspace{5cm}\times \left|\sum_{j_3=r+1}^{m}\frac{t_{j_3}^{(k)}-t_{j_3}^{(l)}}{2^{j_3}}\right|.
\end{align*}
We define
  \begin{align*}P_0(t_{r+1}^{(k)}):=&\sum_{\substack{j_1=1\\j_1\neq r+1}}^m \frac{t_{j_1}^{(k)}}{2^{m+1-j_1}}+\frac{t_{r+1}^{(k)}}{2^{m-r}}, \quad T:=\sum_{j_3=r+2}^{m}\frac{t_{j_3}^{(k)}-t_{j_3}^{(l)}}{2^{j_3}}
    \\P_1(t_{r+1}^{(l)}):=&\sum_{j_1=1}^r \frac{t_{j_1}^{(k)}}{2^{m+1-j_1}}+\frac{t_{r+1}^{(l)}}{2^{m-r}}+\sum_{j_1=r+2}^m \frac{t_{j_1}^{(l)}}{2^{m+1-j_1}}
   \end{align*}
   to write (after summation over the indices $t_{r+1}^{(k)}$ and $t_{r+1}^{(l)}$ with $t_{r+1}^{(k)} \not=t_{r+1}^{(l)}$)
   \begin{align*}
       S_9=&\sum_{r=0}^{m-1} \sum_{\substack{t_1^{(k)},\dots,t_{r}^{(k)},t_{r+2}^{(k)},\dots,t_m^{(k)},\\ t_{r+2}^{(l)},\dots,t_m^{(l)}=0}}^{1} \Bigg(\frac{P_0(1)P_1(0)+P_0(0)P_1(1)}{2^{r+1}} \\
       & \hspace{5cm} +T(P_0(1)P_1(0)-P_0(0)P_1(1))\Bigg).
   \end{align*}
   Since
   $$ P_0(1)P_1(0)-P_0(0)P_1(1)=-\frac{1}{2^{m-r}}\sum_{j=r+2}^{m}\frac{t_{j}^{(k)}-t_{j}^{(l)}}{2^{m+1-j}}, $$
   we obtain
   \begin{align*}
      \sum_{\substack{t_1^{(k)},\dots,t_{r}^{(k)},t_{r+2}^{(k)},\dots,t_m^{(k)},\\ t_{r+2}^{(l)},\dots,t_m^{(l)}=0}}^{1}& T(P_0(1)P_1(0)-P_0(0)P_1(1) \\
      =& -\frac{1}{2^{m-r}}\sum_{j_1,j_3=r+2}^m \frac{1}{2^{m+1-j_1}}\frac{1}{2^{j_3}}\\
      & \times \underbrace{\sum_{\substack{t_1^{(k)},\dots,t_{r}^{(k)},t_{r+2}^{(k)},\dots,t_m^{(k)},\\ t_{r+2}^{(l)},\dots,t_m^{(l)}=0}}^{1}(t_{j_1}^{(k)}-t_{j_1}^{(l)})(t_{j_3}^{(k)}-t_{j_3}^{(l)})}_{\text{$=0$ for $j_1\neq j_3$}} \\
      =& -\frac{1}{2^{m-r}}\sum_{j=r+2}^m \frac{1}{2^{m+1}}2^{2m-r-3}\\
      =& -\frac{m-r-1}{16}.
   \end{align*}
   Observe that
   \begin{eqnarray*}
   \lefteqn{P_0(1)P_1(0)+P_0(0)P_1(1)}\\
   & =&2\left(\sum_{\substack{j_1=1\\j_1\neq r+1}}^{m}\frac{t_{j_1}^{(k)}}{2^{m+1-j_1}}\right)\left(\sum_{\substack{j_2=1\\j_2\neq r+1}}^{m}\frac{t_{j_2}^{(l)}}{2^{m+1-j_2}}\right)  \\
   & & + \frac{1}{2^{m-r}}\sum_{\substack{j_1=1\\j_1\neq r+1}}^{m}\frac{t_{j_1}^{(k)}}{2^{m+1-j_1}}+\frac{1}{2^{m-r}}\sum_{\substack{j_2=1\\j_2\neq r+1}}^{m}\frac{t_{j_2}^{(l)}}{2^{m+1-j_2}}\\
   & =: & A+B+C.
   \end{eqnarray*}
   It is straight-forward to prove
   $$ \sum_{\substack{t_1^{(k)},\dots,t_{r}^{(k)},t_{r+2}^{(k)},\dots,t_m^{(k)},\\ t_{r+2}^{(l)},\dots,t_m^{(l)}=0}}^{1}B= \sum_{\substack{t_1^{(k)},\dots,t_{r}^{(k)},t_{r+2}^{(k)},\dots,t_m^{(k)},\\ t_{r+2}^{(l)},\dots,t_m^{(l)}=0}}^{1}C=\frac{1}{16}\sum_{\substack{j=1\\ j\neq r+1}}^m2^j. $$
   Further we have
   \begin{align*}
     \sum_{\substack{t_1^{(k)},\dots,t_{r}^{(k)},t_{r+2}^{(k)},\dots,t_m^{(k)},\\ t_{r+2}^{(l)},\dots,t_m^{(l)}=0}}^{1}A=&\frac{2}{4^{m+1}}\sum_{\substack{j_1,j_2=1\\j_1\neq r+1,\, j_2\leq r}}^{m}2^{j_1+j_2}\sum_{\substack{t_1^{(k)},\dots,t_{r}^{(k)},t_{r+2}^{(k)},\dots,t_m^{(k)},\\ t_{r+2}^{(l)},\dots,t_m^{(l)}=0}}^{1}t_{j_1}^{(k)}t_{j_2}^{(k)}\\&+\frac{2}{4^{m+1}}\sum_{\substack{j_1,j_2=1\\j_1\neq r+1,\, j_2\geq r+2}}^{m}2^{j_1+j_2}\sum_{\substack{t_1^{(k)},\dots,t_{r}^{(k)},t_{r+2}^{(k)},\dots,t_m^{(k)},\\ t_{r+2}^{(l)},\dots,t_m^{(l)}=0}}^{1}t_{j_1}^{(k)}t_{j_2}^{(l)}.
   \end{align*}
   The second sum is easily computed to equal
   $$ \frac{2}{4^{m+1}}2^{2m-r-4}\sum_{\substack{j_1,j_2=1\\j_1\neq r+1,\, j_2\geq r+2}}^{m}2^{j_1+j_2}=\frac{1}{2^{r+4}}\sum_{j_2=r+2}^{m}2^{j_2}\left(\sum_{j_1=1}^{m}2^{j_1}-2^{r+1}\right), $$
   while in the first sum it is necessary to distinguish between the cases $j_1=j_2$ and $j_1\neq j_2$. We obtain for this sum the result
\begin{align*} 
\frac{2}{4^{m+1}}2^{m-r-1}\Bigg( & 2^{m-3}\sum_{j_1=r+2}^{m}\sum_{j_2=1}^{r}2^{j_1+j_2}\\
&+2^{m-3}\left(\sum_{j_1,j_2=1}^{r}2^{j_1+j_2}-\sum_{j=1}^{r}2^{2j}\right)+2^{m-2}\sum_{j_1=1}^{r}2^{2j_1}\Bigg). 
\end{align*}
   We put everything together to find the claimed result for $S_9$.

\paragraph{{\bf Calculation of $S_{10}$.}}

We have
\begin{align*}
    S_{10}=&\sum_{\substack{t_1^{(k)},\dots,t_m^{(k)}, t_1^{(l)},\dots,t_m^{(l)}=0 }}^{1}
       \left|\sum_{j_1=1}^{m}\frac{t_{j_1}^{(k)}-t_{j_1}^{(l)}}{2^{j_1}}\right|
       \left|\sum_{j_2=1}^{m}\frac{t_{j_2}^{(k)}-t_{j_2}^{(l)}}{2^{m+1-j_2}}\right| \\
    =&\sum_{r=0}^{m-1}\sum_{s=0}^{m-r-1}
        \sum_{\substack{t_{r+1}^{(k)},\dots,t_{m-s}^{(k)}, t_{r+1}^{(l)},\dots,t_{m-s}^{(l)}=0 \\ t_i^{(k)}=t_i^{(l)}\forall i=1,\dots,r,\,t_{r+1}^{(k)}\neq t_{r+1}^{(l)}  \\ t_{m+1-i}^{(k)}=t_{m+1-i}^{(l)}\forall i=1,\dots,s,\,t_{m-s}^{(k)}\neq t_{m-s}^{(l)} }}^{1}
       \left|\sum_{j_1=r+1}^{m-s}\frac{t_{j_1}^{(k)}-t_{j_1}^{(l)}}{2^{j_1}}\right|\\
       & \hspace{7cm}\times 
       \left|\sum_{j_2=r+1}^{m-s}\frac{t_{j_2}^{(k)}-t_{j_2}^{(l)}}{2^{m+1-j_2}}\right| \\
      =&\sum_{r=0}^{m-1}\sum_{s=0}^{m-r-1}2^{r+s} 
        \sum_{\substack{t_{r+1}^{(k)},\dots,t_{m-s}^{(k)}, t_{r+1}^{(l)},\dots,t_{m-s}^{(l)}=0 \\ t_{r+1}^{(k)}\neq t_{r+1}^{(l)},\, t_{m-s}^{(k)}\neq t_{m-s}^{(l)} }}^{1}
       \left|\sum_{j_1=r+1}^{m-s}\frac{t_{j_1}^{(k)}-t_{j_1}^{(l)}}{2^{j_1}}\right|\\
       & \hspace{7cm}\times 
       \left|\sum_{j_2=r+1}^{m-s}\frac{t_{j_2}^{(k)}-t_{j_2}^{(l)}}{2^{m+1-j_2}}\right|.
\end{align*}

We write $S_{10}=P_1+P_2$, where $P_1$ is the part of the last expression where $s=m-r-1$ and
$P_2$ is the part where $s\leq m-r-2$. For $P_1$ we have
$$P_1=\sum_{r=0}^{m-1}2^{m-1} 
        \sum_{t_{r+1}^{(k)}=0}^{1}\sum_{t_{r+1}^{(l)}=1-t_{r+1}^{(k)}} \left|\frac{t_{r+1}^{(k)}-t_{r+1}^{(l)}}{2^{r+1}}\right|\left|\frac{t_{r+1}^{(k)}-t_{r+1}^{(l)}}{2^{m-r}}\right|=\sum_{r=0}^{m-1} \frac12=\frac{m}{2}.  $$
For the evaluation of $P_2$ we abbreviate $$T_1:=\sum_{j_1=r+2}^{m-s-1}\frac{t_{j_1}^{(k)}-t_{j_1}^{(l)}}{2^{j_1}}\quad \mbox{ and }\quad T_2:=\sum_{j_2=r+2}^{m-s-1}\frac{t_{j_2}^{(k)}-t_{j_2}^{(l)}}{2^{m+1-j_2}}$$ (which are empty sums for $s=m-r-2$). Then we sum the expression over $t_{r+1}^{(k)}$, $t_{r+1}^{(l)}$, $t_{m-s}^{(k)}$ and $t_{m-s}^{(l)}$, where the first and the latter two must be different, respectively. We get
\begin{align*}
    P_2   =&\sum_{r=0}^{m-1}\sum_{s=0}^{m-r-2}2^{r+s} 
        \sum_{\substack{t_{r+1}^{(k)},\dots,t_{m-s}^{(k)}, \\ t_{r+1}^{(l)},\dots,t_{m-s}^{(l)}=0}}^{1}
       \left|\frac{t_{r+1}^{(k)}-t_{r+1}^{(l)}}{2^{r+1}}+T_1+\frac{t_{m-s}^{(k)}-t_{m-s}^{(l)}}{2^{m-s}}\right| \\ &\hspace{5cm}\times 
       \left|\frac{t_{m-s}^{(k)}-t_{m-s}^{(l)}}{2^{s+1}}+T_2+\frac{t_{r+1}^{(k)}-t_{r+1}^{(l)}}{2^{r+1}}\right| \\
       =& \sum_{r=0}^{m-1}\sum_{s=0}^{m-r-2}2^{r+s} \\
       & \times 
        \sum_{\substack{t_{r+2}^{(k)},\dots,t_{m-s-1}^{(k)}, \\ t_{r+2}^{(l)},\dots,t_{m-s-1}^{(l)}=0}}^{1}
        \bigg\{ \left(\frac{1}{2^{r+1}}+T_1+\frac{1}{2^{m-s}}\right)
         \left(\frac{1}{2^{s+1}}+T_2+\frac{1}{2^{m-r}}\right) \\
          &\hspace{3cm}+\left(\frac{1}{2^{r+1}}+T_1-\frac{1}{2^{m-s}}\right)
         \left(\frac{1}{2^{s+1}}-T_2-\frac{1}{2^{m-r}}\right) \\
         &\hspace{3cm}+ \left(\frac{1}{2^{r+1}}-T_1-\frac{1}{2^{m-s}}\right)
         \left(\frac{1}{2^{s+1}}+T_2-\frac{1}{2^{m-r}}\right) \\
          &\hspace{3cm}+\left(\frac{1}{2^{r+1}}-T_1+\frac{1}{2^{m-s}}\right)
         \left(\frac{1}{2^{s+1}}-T_2+\frac{1}{2^{m-r}}\right)\bigg\}.
\end{align*}
The expression in curled brackets simplifies very nicely and we get
\begin{align*}
    P_2=&4\sum_{r=0}^{m-1}\sum_{s=0}^{m-r-2}2^{r+s} 
        \sum_{\substack{t_{r+2}^{(k)},\dots,t_{m-s-1}^{(k)}, \\ t_{r+2}^{(l)},\dots,t_{m-s-1}^{(l)}=0}}^{1} 
        \left(\frac{1}{2^{r+s+2}}+\frac{1}{2^{2m-r-s}}\right) \\
       =& 4^{m-1}\sum_{r=0}^{m-1}\sum_{s=0}^{m-r-2}2^{-r-s}\left(\frac{1}{2^{r+s+2}}+\frac{1}{2^{2m-r-s}}\right)\\
       =&\frac{8(4^m-1)+9m^2-33m}{72}.
\end{align*}
The formula for $S_{10}$ follows.
\section{The proof of Theorem~\ref{thm3}}\label{sec:digsh}

In this proof we consider the Hammersley point set as digital net with generating matrices
$$C_1 =  \left ( \begin{array}{llcll}
1 & 0 & \cdots & 0 & 0\\
0 & 1 & \cdots & 0 & 0 \\
\multicolumn{5}{c}\dotfill\\
0 & 0 & \cdots & 1 & 0 \\
0 & 0 & \cdots & 0 & 1
\end{array} \right ) \ \ \text{ and } \ \
C_2 = \left ( \begin{array}{llcll}
0 & 0 & \cdots & 0 & 1\\
0 & 0 & \cdots & 1 & 0 \\
\multicolumn{5}{c}\dotfill\\
0 & 1 & \cdots & 0 & 0 \\
1 & 0 & \cdots & 0 & 0
\end{array} \right ).$$
Let $k \in \{0,1,\ldots,2^m-1\}$ with dyadic expansion $k=\kappa_0+\kappa_1 2+\cdots+\kappa_{m-1} 2^{m-1}$  and corresponding digit vector $\vec{k}=(\kappa_0,\kappa_1,\ldots,\kappa_{m-1})^{\top}$ over $\ZZ_2$. Then the $k^{{\rm th}}$ element $(x_k,y_k)$ of the Hammersley point set is given by $x_k=\frac{\xi_{k,1}}{2}+\frac{\xi_{k,2}}{2^2}+\cdots + \frac{\xi_{k,m}}{2^m}$ and $y_k=\frac{\eta_{k,1}}{2}+\frac{\eta_{k,2}}{2^2}+\cdots + \frac{\eta_{k,m}}{2^m}$, where $$(\xi_{k,1},\xi_{k,2},\ldots,\xi_{k,m})^\top =C_1 \vec{k} \quad \mbox{ and }\quad (\eta_{k,1},\eta_{k,2},\ldots,\eta_{k,m})^\top =C_2 \vec{k}.$$

\begin{proof}[Proof of Theorem~\ref{thm3}]
In \cite{DP05} the analogous quantity, but for digital shifts of depth $m$ was computed. The present case can be interpreted as digital shifts of depth $m=\infty$.  Let $(x_k,y_k)$ for $k=0,1,\dots,2^m-1$ denote the elements of the Hammersley point set. A slight modification\footnote{Set $m=\infty$ in \cite[Lemma~3]{DP05} and take care of the resulting consequences.} of the proof in \cite{DP05} shows that 
\begin{eqnarray*}
\lefteqn{\EE_{\bsdelta}[(L_{2,N}(\cH_m \oplus \bsdelta))^2]}\\
&  = & -\frac{1}{4} \sum_{k=1}^\infty  \tau(k) \sum_{n,h=0}^{2^m -1} \wal_k(x_{n} \oplus x_{h})  -\frac{1}{4} \sum_{l=1}^\infty \tau(l) \sum_{n,h=0}^{2^m -1}  \wal_l(y_{n} \oplus y_{h}) \\
& & +\frac{1}{4} \sum_{k,l=0 \atop (k,l)\not=(0,0)}^\infty \tau(k) \tau(l)  \sum_{n,h=0}^{2^m -1}\wal_k(x_{n} \oplus x_{h}) \wal_l(y_{n} \oplus y_{h}),
\end{eqnarray*}
where $\wal_k$ denotes the $k^{{\rm th}}$ dyadic Walsh function which is given by 
$$\wal_k(x)=(-1)^{\kappa_0 \xi_1+\kappa_1 \xi_2+\cdots+\kappa_{r-1} \xi_r}$$
whenever $k \in \NN_0$ and $x \in [0,1)$ have dyadic expansions $k=\kappa_0+\kappa_1 2+\cdots +\kappa_{r-1} 2^{r-1}$ and $x=\frac{\xi_1}{2}+\frac{\xi_2}{2^2}+\cdots$, respectively. Further $\tau(0)=\frac13$ and $\tau(k)=-\frac{1}{6\cdot 4^{r(k)}}$ for $k>0$, where $r(k)$ denotes the unique integer $r$ such that $2^r\leq k<2^{r+1}$.

We have 
\begin{eqnarray*}
\sum_{n,h=0}^{2^m -1} \wal_k(x_{n} \oplus x_{h})  = \left|\sum_{n=0}^{2^m -1} \wal_k(x_{n})\right|^2= \left\{ 
\begin{array}{ll}
4^m & \mbox{ if } C_1^{\top} \vec{k}=\vec{0},\\
0 & \mbox{ otherwise},
\end{array}
\right.
\end{eqnarray*}
where we used a well-known relation between digital nets and Walsh-functions (see, for example,  \cite[Lemma~4.75]{DP10} or \cite[Lemma~2]{DP05}). Although this relation is only stated for $0\leq k\leq 2^m-1$, it also holds for $k\geq 2^m$ with dyadic expansion $k=\sum_{i=0}^{s}\kappa_{i}2^i$, where $s\geq m$, if we set $\vec{k}=(\kappa_0,\dots,\kappa_{m-1})^{\top}$.
Since $C_1$ is regular the condition $C_1^{\top} \vec{k}=\vec{0}$ is equivalent to $k=2^m k'$ with $k' \in \NN$. Therefore we obtain
\begin{eqnarray*}
\sum_{k=1}^\infty  \tau(k) \sum_{n,h=0}^{2^m -1} \wal_k(x_{n} \oplus x_{h})  = 4^m \sum_{k'=1}^\infty \tau(2^m k') = \sum_{u=0}^{\infty} \left(-\frac{1}{6 \cdot 4^u}\right) 2^u = -\frac{1}{3}.
\end{eqnarray*}
Likewise we have $$\sum_{l=1}^\infty  \tau(l) \sum_{n,h=0}^{2^m -1} \wal_l(y_{n} \oplus y_{h})   = -\frac{1}{3}.$$
Furthermore,
\begin{eqnarray*}
\sum_{n,h=0}^{2^m -1}\wal_k(x_{n} \oplus x_{h}) \wal_l(y_{n} \oplus y_{h}) & = & \left|  \sum_{n=0}^{2^m -1}\wal_k(x_{n}) \wal_l(y_{n})\right|^2\\
& = & \left\{ 
\begin{array}{ll}
4^m & \mbox{ if } C_1^{\top} \vec{k} +  C_2^{\top} \vec{l}=\vec{0},\\
0 & \mbox{ otherwise},
\end{array}
\right.
\end{eqnarray*}
where we used \cite[Lemma~4.75]{DP10} (or \cite[Lemma~2]{DP05}) again. Hence
\begin{eqnarray*}
\EE_{\bsdelta}[(L_{2,N}(\cP \oplus \bsdelta))^2] & = & \frac{1}{6} +  4^{m-1}\sum_{k,l=0  \atop { (k,l)\not=(0,0)  \atop  C_1^{\top} \vec{k} +  C_2^{\top} \vec{l}=\vec{0} }}^\infty \tau(k) \tau(l).
\end{eqnarray*}
We have
\begin{align*}
 \sum_{k,l=0  \atop { (k,l)\not=(0,0)  \atop  C_1^{\top} \vec{k} +  C_2^{\top} \vec{l}=\vec{0} }}^\infty \tau(k) \tau(l)  = & \sum_{k=1   \atop  C_1^{\top} \vec{k}=\vec{0} }^\infty \tau(k) \tau(0) + \sum_{l=1  \atop   C_2^{\top} \vec{l}=\vec{0} }^\infty \tau(0) \tau(l)  + \sum_{k,l=1  \atop  C_1^{\top} \vec{k} +  C_2^{\top} \vec{l}=\vec{0}}^\infty \tau(k) \tau(l)  \\
 = & -\frac{2}{9 \cdot 4^m} + \sum_{k,l=1  \atop  C_1^{\top} \vec{k} +  C_2^{\top} \vec{l}=\vec{0}}^\infty \tau(k) \tau(l). 
\end{align*}
Hence
\begin{eqnarray*}
\EE_{\bsdelta}[(L_{2,N}(\cP \oplus \bsdelta))^2]
& = & \frac{1}{9} + 4^{m-1} \sum_{k,l=1  \atop  C_1^{\top} \vec{k} +  C_2^{\top} \vec{l}=\vec{0}}^\infty \tau(k) \tau(l). 
\end{eqnarray*}
We have
\begin{eqnarray*}
\Sigma:=\sum_{k,l=1  \atop  C_1^{\top} \vec{k} +  C_2^{\top} \vec{l}=\vec{0}}^\infty \tau(k) \tau(l)  = \frac{1}{36}  \sum_{u,v=0}^{\infty} \frac{1}{4^{u+v}} \underbrace{\sum_{k=2^u}^{2^{u+1}-1} \sum_{l=2^v}^{2^{v+1}-1}}_{C_1^{\top} \vec{k} +  C_2^{\top} \vec{l}=\vec{0}} 1. 
\end{eqnarray*}
Denote by $e_1,\ldots,e_m$ the row vectors of $C_1$ and by $d_1,\ldots ,d_m$ the row vectors of $C_2$. Set $e_i=d_i=\vec{0}$ for $i\geq m+1$. The condition $C_1^\top \vec{k}+C_2^\top \vec{l}=\vec{0}$ can be rewritten as $$e_1 \kappa_0+\cdots +e_{u} \kappa_{u-1}+e_{u+1}+d_1 \lambda_0+\cdots + d_{v}\lambda_{v-1}+d_{v+1} =\vec{0},$$ where $k=\kappa_0+\kappa_1 2+\cdots +\kappa_{u-1} 2^{u-1}+2^u$ and $l=\lambda_0+\lambda_1 p+\cdots +\lambda_{v-1} 2^{v-1}+2^v$.  

Since $e_1,\ldots,e_{u+1},d_1,\ldots ,d_{v+1}$ are linearly independent as long as $u+1+v+1\le m$ we must have $u+v \ge m-1$. Hence
\begin{eqnarray*}
\Sigma &=& \frac{1}{36 }\sum_{u,v=0 \atop u+v \ge m-1}^{\infty} \frac{1}{4^{u+v}} \underbrace{\sum_{\kappa_{u-1},\ldots, \kappa_0=0}^{1}\sum_{\lambda_{v-1},\ldots, \lambda_0=0}^{1}}_{e_1 \kappa_0+\cdots +e_{u+1} \kappa_u+d_1 \lambda_0+\cdots + d_{v+1}\lambda_v =\vec{0}}1.
\end{eqnarray*}
Now we split the range of summation over $u$ and $v$. We have
\begin{eqnarray*}
\Sigma &=& \frac{1}{36 }\sum_{u,v=0 \atop u+v \ge m-1}^{m-1} \frac{1}{4^{u+v}} \underbrace{\sum_{\kappa_{u-1},\ldots, \kappa_0=0}^{1}\sum_{\lambda_{v-1},\ldots, \lambda_0=0}^{1}}_{e_1 \kappa_0+\cdots +e_{u+1} \kappa_u+d_1 \lambda_0+\cdots + d_{v+1}\lambda_v =\vec{0}}1\\
& & + \frac{1}{36 }\sum_{u=m}^\infty \sum_{v=0}^{m-1} \frac{1}{4^{u+v}} \underbrace{\sum_{\kappa_{u-1},\ldots, \kappa_0=0}^{1}\sum_{\lambda_{v-1},\ldots, \lambda_0=0}^{1}}_{e_1 \kappa_0+\cdots +e_{u+1} \kappa_u+d_1 \lambda_0+\cdots + d_{v+1}\lambda_v =\vec{0}}1\\
& & + \frac{1}{36 }\sum_{u=0}^{m-1} \sum_{v=m}^{\infty} \frac{1}{4^{u+v}} \underbrace{\sum_{\kappa_{u-1},\ldots, \kappa_0=0}^{1}\sum_{\lambda_{v-1},\ldots, \lambda_0=0}^{1}}_{e_1 \kappa_0+\cdots +e_{u+1} \kappa_u+d_1 \lambda_0+\cdots + d_{v+1}\lambda_v =\vec{0}}1\\
& &  \frac{1}{36 }\sum_{u,v=m}^{\infty} \frac{1}{4^{u+v}} \underbrace{\sum_{\kappa_{u-1},\ldots, \kappa_0=0}^{1}\sum_{\lambda_{v-1},\ldots, \lambda_0=0}^{1}}_{e_1 \kappa_0+\cdots +e_{u+1} \kappa_u+d_1 \lambda_0+\cdots + d_{v+1}\lambda_v =\vec{0}}1.
\end{eqnarray*}

We consider the first sum where $u,v \in \{0,1,\ldots,m-1\}$ and $\tau:=u+v \ge m-1$. Then we have $$e_1 \kappa_0+\cdots +e_{u+1} \kappa_u+d_1 \lambda_0+\cdots + d_{v+1}\lambda_v =\vec{0}$$ iff 
$$\left(
\begin{array}{l}
\kappa_0\\
\vdots \\
\kappa_{m-\tau +u-2}\\
\kappa_{m-\tau +u-1}\\
\vdots \\
\kappa_u =1 \\
0 \\
\vdots \\
0
\end{array}\right)
+\left(
\begin{array}{l}
0\\
\vdots \\
0\\
\lambda_{\tau -u}=1\\
\vdots \\
\lambda_{m-u-1} \\
\lambda_{m-u-2} \\
\vdots \\
\lambda_0
\end{array}\right)
=\vec{0},
$$ i.e., iff  $\tau=m-1$ and
\begin{itemize}
\item $\kappa_0=\ldots = \kappa_{u-1}=0$ and
\item $\kappa_u=\lambda_v=1$ and
\item $\lambda_0=\ldots=\lambda_{v-1}=0$,
\end{itemize}
or $\tau \in \{m,\ldots,2m-2\}$ and  
\begin{itemize}
\item $\kappa_0=\cdots = \kappa_{m-\tau+u-2}=0$, $\kappa_{m-\tau+u-2}=1$ and
\item $\lambda_0=\cdots =\lambda_{m-u-2}=0$, $\lambda_{m-u-1}=1$ and 
\item $\kappa_{i}=\lambda_{m-1-i}$ for $i=m-\tau+u,\ldots ,u-1$.
\end{itemize}
Therefore we have
\begin{eqnarray*}
\lefteqn{\frac{1}{36 }\sum_{u,v=0 \atop u+v \ge m-1}^{m-1} \frac{1}{4^{u+v}} \underbrace{\sum_{\kappa_{u-1},\ldots, \kappa_0=0}^{1}\sum_{\lambda_{v-1},\ldots, \lambda_0=0}^{1}}_{e_1 \kappa_0+\cdots +e_{u+1} \kappa_u+d_1 \lambda_0+\cdots + d_{v+1}\lambda_v =\vec{0}}1}\\
& = & \frac{1}{36} \left[\frac{1}{4^{m-1}} \sum_{u,v=0 \atop u+v = m-1}^{m-1}1 +  \sum_{\tau=m}^{2m-2} \frac{2^{\tau-m}}{4^\tau} \sum_{u,v=0 \atop u+v = \tau}^{m-1}1  \right]
\end{eqnarray*}
For $m-1 \le \tau \le 2m-2$ we have $$\sum_{u,v=0\atop u+v=\tau}^{m-1}1=2 m -\tau -1.$$ Hence
\begin{eqnarray*}
\lefteqn{\frac{1}{36 }\sum_{u,v=0 \atop u+v \ge m-1}^{m-1} \frac{1}{4^{u+v}} \underbrace{\sum_{\kappa_{u-1},\ldots, \kappa_0=0}^{1}\sum_{\lambda_{v-1},\ldots, \lambda_0=0}^{1}}_{e_1 \kappa_0+\cdots +e_{u+1} \kappa_u+d_1 \lambda_0+\cdots + d_{v+1}\lambda_v =\vec{0}}1}\\
& = & \frac{1}{36} \left[\frac{m}{4^{m-1}} + \frac{1}{2^m}  \sum_{\tau=m}^{2m-2} \frac{2 m -\tau -1}{2^\tau} \right].
\end{eqnarray*}
Now we use $$\sum_{\tau=m}^{2m-2}\frac{2m-\tau-1}{2^{\tau}}= \frac{2 m}{2^m}+\frac{4 (1-2^m)}{4^{m}}$$ and hence
\begin{eqnarray*}
\lefteqn{\frac{1}{36 }\sum_{u,v=0 \atop u+v \ge m-1}^{m-1} \frac{1}{4^{u+v}} \underbrace{\sum_{\kappa_{u-1},\ldots, \kappa_0=0}^{1}\sum_{\lambda_{v-1},\ldots, \lambda_0=0}^{1}}_{e_1 \kappa_0+\cdots +e_{u+1} \kappa_u+d_1 \lambda_0+\cdots + d_{v+1}\lambda_v =\vec{0}}1}\\
& = & \frac{1}{36} \left[\frac{m}{4^{m-1}} + \frac{2 m}{4^m}+\frac{4 (1-2^m)}{8^{m}} \right]\\
& = & \frac{m}{6 \cdot 4^m} +\frac{1}{9 \cdot 8^m}-\frac{1}{9 \cdot 4^m} .
\end{eqnarray*}

Next we consider the second sum where $u \in \{m,m+1,\ldots\}$ and $v \in \{0,1,\ldots,m-1\}$. Then we have $$e_1 \kappa_0+\cdots +e_{u+1} \kappa_u+d_1 \lambda_0+\cdots + d_{v+1}\lambda_v =\vec{0}$$ iff 
$$\left(
\begin{array}{l}
\kappa_0\\
\vdots \\
\kappa_{m-v-2}\\
\kappa_{m-v-1}\\
\kappa_{m-v}\\
\vdots \\
\kappa_{m-1}
\end{array}\right)
+\left(
\begin{array}{l}
0\\
\vdots \\
0\\
\lambda_{v}=1\\
\lambda_{v-1}\\
\vdots \\
\lambda_0
\end{array}\right)
=\vec{0},
$$ i.e., iff
\begin{itemize}
\item $\kappa_0=\ldots = \kappa_{m-v-2}=0$, $\kappa_{m-v-1}=1$, and
\item $\kappa_{m-v}=\lambda_{v-1}$, \ldots , $\kappa_{m-1}=\lambda_0$.
\end{itemize}
The digits $\kappa_m , \ldots,\kappa_{u-1}$ are arbitrary. Hence $$\underbrace{\sum_{\kappa_{u-1},\ldots, \kappa_0=0}^{1}\sum_{\lambda_{v-1},\ldots, \lambda_0=0}^{1}}_{e_1 \kappa_0+\cdots +e_{u+1} \kappa_u+d_1 \lambda_0+\cdots + d_{v+1}\lambda_v =\vec{0}}1 = 2^{u-m} 2^v = 2^{u+v-m}.$$ This yields for the second sum
\begin{align*}
 \frac{1}{36 }\sum_{u=m}^\infty \sum_{v=0}^{m-1} \frac{1}{4^{u+v}} \underbrace{\sum_{\kappa_{u-1},\ldots, \kappa_0=0}^{1}\sum_{\lambda_{v-1},\ldots, \lambda_0=0}^{1}}_{e_1 \kappa_0+\cdots +e_{u+1} \kappa_u+d_1 \lambda_0+\cdots + d_{v+1}\lambda_v =\vec{0}}1 = & \frac{1}{36 }\sum_{u=m}^\infty \sum_{v=0}^{m-1} \frac{1}{4^{u+v}} 2^{u+v-m}\\
= & \frac{1}{36 \cdot 2^m} \sum_{u=m}^{\infty}\frac{1}{2^u} \sum_{v=0}^{m-1}\frac{1}{2^v} \\
= & \frac{1}{9\cdot 4^m}-  \frac{1}{9\cdot 8^m}.
\end{align*}
In the same way we can calculate the third sum and obtain
$$ \frac{1}{36 }\sum_{u=0}^{m-1} \sum_{v=m}^{\infty} \frac{1}{4^{u+v}} \underbrace{\sum_{\kappa_{u-1},\ldots, \kappa_0=0}^{1}\sum_{\lambda_{v-1},\ldots, \lambda_0=0}^{1}}_{e_1 \kappa_0+\cdots +e_{u+1} \kappa_u+d_1 \lambda_0+\cdots + d_{v+1}\lambda_v =\vec{0}}1 = \frac{1}{9\cdot 4^m}-  \frac{1}{9\cdot 8^m}.$$

It remains to evaluate the last sum where $u,v \in \{m,m+1,\ldots\}$. Then we have $$e_1 \kappa_0+\cdots +e_{u+1} \kappa_u+d_1 \lambda_0+\cdots + d_{v+1}\lambda_v =\vec{0}$$ iff 
$$\left(
\begin{array}{l}
\kappa_0\\
\vdots \\
\kappa_{m-1}
\end{array}\right)
+\left(
\begin{array}{l}
\lambda_{m-1}\\
\vdots \\
\lambda_0
\end{array}\right)
=\vec{0},
$$ i.e., iff $\kappa_i=\lambda_{m-i-1}$ for $i=0,\ldots ,m-1$. The digits $\kappa_m,\ldots,\kappa_{u-1}$ and $\lambda_m,\ldots,\lambda_{v-1}$ are arbitrary. Hence $$\underbrace{\sum_{\kappa_{u-1},\ldots, \kappa_0=0}^{1}\sum_{\lambda_{v-1},\ldots, \lambda_0=0}^{1}}_{e_1 \kappa_0+\cdots +e_{u+1} \kappa_u+d_1 \lambda_0+\cdots + d_{v+1}\lambda_v =\vec{0}}1 = 2^m 2^{u-m} 2^{v-m} = 2^{u+v-m}.$$ This yields for the last sum
\begin{eqnarray*}
 \frac{1}{36 }\sum_{u,v=m}^{\infty} \frac{1}{4^{u+v}} \underbrace{\sum_{\kappa_{u-1},\ldots, \kappa_0=0}^{1}\sum_{\lambda_{v-1},\ldots, \lambda_0=0}^{1}}_{e_1 \kappa_0+\cdots +e_{u+1} \kappa_u+d_1 \lambda_0+\cdots + d_{v+1}\lambda_v =\vec{0}}1 & = &  \frac{1}{36 }\sum_{u,v=m}^{\infty} \frac{1}{4^{u+v}}2^{u+v-m} \\
& = & \frac{1}{36 \cdot 2^m} \left(\sum_{u=m}^{\infty} \frac{1}{2^u}\right)^2\\
& = & \frac{1}{9 \cdot 8^m}.
\end{eqnarray*}
Putting all four sums together we obtain 
\begin{align*}
\Sigma =& \frac{m}{6 \cdot 4^m} +\frac{1}{9 \cdot 8^m}-\frac{1}{9 \cdot 4^m}  + \frac{1}{9\cdot 4^m}-  \frac{1}{9\cdot 8^m} + \frac{1}{9\cdot 4^m}-  \frac{1}{9\cdot 8^m} + \frac{1}{9 \cdot 8^m}\\
= &  \frac{m}{6 \cdot 4^m} + \frac{1}{9\cdot 4^m}.
\end{align*}
Finally this yields
\begin{eqnarray*}
\EE_{\bsdelta}[(L_{2,N}(\cP \oplus \bsdelta))^2] = \frac{1}{9} + 4^{m-1} \Sigma = \frac{m}{24} + \frac{5}{36}.
\end{eqnarray*}
\end{proof}

\begin{rem}\rm
If we restrict to the average over all digital $m$-bit shifts $\delta=\frac{\delta^{(1)}}{2}+\frac{\delta^{(2)}}{2^2}+\cdots+\frac{\delta^{(m)}}{2^m}$ per coordinate, then it follows easily from \cite[Theorem~1]{KP06} that 
\begin{eqnarray*}
\EE_{\bsdelta_m}[(L_{2,N}(\cP \oplus \bsdelta_m))^2] =\frac{m}{24 } + \frac{3}{8}+ \frac{1}{4 \cdot 2^m}-\frac{1}{72 \cdot 4^m}.
\end{eqnarray*}
\end{rem}

\begin{rem}
\rm It can be shown that Theorem~\ref{thm3} does not only hold for the Hammersley point set, but for all $(0,m,2)$-nets over $\mathbb{F}_2$. The proof is similar, but a bit more involved than for $\cH_m$.
\end{rem}

\section{The proof of Theorem~\ref{thm2}}\label{sec:proofthm2}

We need the following lemma, which has essentially been proven in~\cite{bilyk2,bilyk} already. Since this result is crucial for the computation of the periodic and extreme $L_2$ discrepancy of rational lattices, we would like to repeat the short proof. Let $\ZZ^*:=\ZZ\setminus\{0\}$.

\begin{lem} \label{lemmabil}
With the notation explained in the lines before Theorem~\ref{thm2}, we have
$$\sum_{\substack{k_1,k_2\in\ZZ^* \\ k_1,k_2\not\equiv 0 \pmod{q_n} \\ k_1+k_2p_n\equiv 0 \pmod{q_n}}}\frac{1}{k_1^2k_2^2}=\frac{\pi^4}{q_n^4}\sum_{r=1}^{q_n-1}\frac{1}{\sin^2{\left(\frac{\pi r}{q_n}\right)}\sin^2{\left(\frac{\pi rp_n}{q_n}\right)}}.$$
\end{lem}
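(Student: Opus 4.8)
The plan is to decouple the double sum into a product of two one-variable sums by organizing the summation according to the residue classes of $k_1$ and $k_2$ modulo $q_n$, and then to evaluate each residue-class sum with the classical partial-fraction identity
$$\sum_{m\in\ZZ}\frac{1}{(m+x)^2}=\frac{\pi^2}{\sin^2(\pi x)}\qquad (x\notin\ZZ).$$

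First I would abbreviate $q=q_n$, $p=p_n$, and record that $\gcd(p,q)=1$, since $p_n/q_n$ is a convergent written in lowest terms. This has two consequences. Under the linear constraint $k_1+k_2p\equiv 0\pmod q$ we have $k_1\equiv -pk_2\pmod q$, and since multiplication by $p$ is a bijection on $\ZZ/q\ZZ$, the two stated conditions $k_1\not\equiv 0$ and $k_2\not\equiv 0\pmod q$ are in fact equivalent. Moreover, as $k_2$ runs through a fixed nonzero residue $r\in\{1,\dots,q-1\}$, the constraint pins $k_1$ to the single residue class $-pr\bmod q$, which is again nonzero.

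Next I would split the summation by fixing the residue $r$ of $k_2$. Because the summand is a product $k_1^{-2}k_2^{-2}$ and the series converges absolutely, the contribution of each $r$ factors as
$$\Biggl(\sum_{k_1\equiv -pr\,(q)}\frac{1}{k_1^2}\Biggr)\Biggl(\sum_{k_2\equiv r\,(q)}\frac{1}{k_2^2}\Biggr).$$
For a nonzero residue $a\in\{1,\dots,q-1\}$, writing $k=qm+a$ gives
$$\sum_{k\equiv a\,(q)}\frac{1}{k^2}=\frac{1}{q^2}\sum_{m\in\ZZ}\frac{1}{(m+a/q)^2}=\frac{\pi^2}{q^2\sin^2(\pi a/q)},$$
where no term is dropped because $a\neq 0$ means this class avoids $k=0$. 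Applying this with $a=r$ and with $a\equiv -pr$, and using that $\sin^2$ is even and $1$-periodic so that $\sin^2(\pi(-pr)/q)=\sin^2(\pi pr/q)$, each $r$ contributes
$$\frac{\pi^2}{q^2\sin^2(\pi pr/q)}\cdot\frac{\pi^2}{q^2\sin^2(\pi r/q)}.$$

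Summing over $r=1,\dots,q-1$ then yields precisely the right-hand side, which completes the proof. I do not anticipate a genuine obstacle: the only points requiring care are justifying the rearrangement into a product of single sums by absolute convergence, and checking that the nonvanishing hypotheses match exactly the avoidance of the zero residue class, so that the partial-fraction evaluation applies with no boundary corrections.
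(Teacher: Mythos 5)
Your proposal is correct and follows essentially the same route as the paper: both arguments reduce the double sum to the residue classes of $k_2$ modulo $q_n$ (with $k_1$ then confined to the class $-p_nk_2 \bmod q_n$) and evaluate each resulting lattice sum with the identity $\sum_{m\in\ZZ}(m+x)^{-2}=\pi^2/\sin^2(\pi x)$, applied twice. The only difference is presentational: you factor each residue-class contribution as a product of two independent one-variable sums, while the paper nests the two summations (first over $k_1$ for fixed $k_2$, then over $k_2$ grouped by residue), which amounts to the same computation.
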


\begin{proof}
    We make use of the formula
$$ \sum_{k\in \ZZ} \frac{1}{(k+x)^2}=\frac{\pi^2}{\sin^2{(\pi x)}} \quad \mbox{ for $x\in\RR\setminus\ZZ$.}$$ 

For $k_1,k_2\in\ZZ^*$ with $k_1,k_2\not\equiv 0 \pmod{q_n}$ and $k_1+k_2p_n\equiv 0 \pmod{q_n}$ we write $k_1+k_2 p_n=lq_n$ with $l\in\ZZ$, and $k_2=mq_n+r$ for $m\in  \ZZ$ and $r\in\{1,\dots,q_n-1\}$. Then
\begin{align*}
   \sum_{\substack{k_1,k_2\in\ZZ^* \\ k_1,k_2\not\equiv 0 \pmod{q_n} \\ k_1+k_2p_n\equiv 0 \pmod{q_n}}}\frac{1}{k_1^2k_2^2}=&
      \sum_{\substack{k_2\in \ZZ\\ k_2\not\equiv 0\pmod{q_n}}}\frac{1}{k_2^2}\sum_{\substack{l\in\ZZ \\ k_1=lq_n-k_2p_n}}\frac{1}{(lq_n-k_2p_n)^2} \\
      =& \frac{1}{q_n^2} \sum_{\substack{k_2\in \ZZ\\ k_2\not\equiv 0\pmod{q_n}}} \frac{1}{k_2^2}\ \sum_{l\in\ZZ} \frac{1}{\left(l-\frac{k_2p_n}{q_n}\right)^2} \\
      =& \frac{1}{q_n^4}\sum_{r=1}^{q_n-1} \sum_{m\in\ZZ} \frac{1}{\left(m+\frac{r}{q_n}\right)^2} \frac{\pi^2}{\sin^2\left(\frac{\pi r p_n}{q_n}\right)} \\
      =& \frac{\pi^4}{q_n^4}\sum_{r=1}^{q_n-1}\frac{1}{\sin^2\left(\frac{\pi r}{q_n}\right)\sin^2\left(\frac{\pi rp_n}{q_n}\right)}.
\end{align*}

\end{proof}

\begin{proof}[Proof of  Theorem~\ref{thm2}]
  First we prove the result on the periodic $L_2$ 
discrepancy of $\mathcal{L}_n(\alpha)$. To this end we use the representation of the periodic $L_2$ discrepancy in terms of exponential sums as given in Proposition~\ref{pr_dia}.  Writing $\mathcal{L}_n(\alpha)=\{\bsx_0,\dots,\bsx_{q_n-1}\}$, where $\bsx_h=\left(\frac{h}{q_n},\left\{\frac{h p_n}{q_n}\right\}\right)$ for $h=0,1,\ldots,q_n-1$, we have \begin{equation}\label{expo}(L_{2,q_n}^{{\rm per}}(\mathcal{L}_n(\alpha)))^2=\frac{1}{9} \sum_{\bsk \in \ZZ^2\setminus\{\bszero\}} \frac{1}{r(\bsk)^2} \left| \sum_{h=0}^{q_n-1} \exp(2 \pi \icomp \bsk \cdot \bsx_h)\right|^2,\end{equation} where the $r(\bsk)$ are defined according to \eqref{def:rk}. Note that the following arguments are similar to those used in the proof of~\cite[Theorem 3]{bilyk}.  In order to study the sum~\eqref{expo} we need to distinguish different instances for the vector~$\bsk$.
\begin{itemize}
    \item The case $\bsk=(k,0)$, $k\neq 0$.  Then we have
    \begin{align*}
        \sum_{\substack{k=1 \\\bsk =(k,0)}}^{\infty} &\frac{1}{r(\bsk)^2} \left| \sum_{h=0}^{q_n-1} \exp\left(2 \pi \icomp k\frac{h}{q_n}\right)\right|^2+\sum_{\substack{k=1 \\\bsk =(-k,0)}}^{\infty} \frac{1}{r(\bsk)^2} \left| \sum_{h=0}^{q_n-1} \exp\left(-2 \pi \icomp k\frac{h}{q_n}\right)\right|^2 \\
        =& 2\sum_{\substack{k=1\\ q_n\mid k}}^{\infty}\frac{q_n^2}{r(k)^2}
        =2\frac{6}{4\pi^2}\sum_{l=1}^{\infty}\frac{q_n^2}{(lq_n)^2}=\frac{1}{2},
    \end{align*}
    where we used the the well known identity $\sum_{k=1}^{\infty}\frac{1}{k^2}=\frac{\pi^2}{6}$ and the fact that
    $$ \sum_{h=0}^{q_n-1} \exp\left(\pm 2 \pi \icomp k\frac{h}{q_n}\right)=\begin{cases}
                               q_n & \text{if $k\equiv 0 \pmod{q_n}$,} \\
                               0 & \text{otherwise.}
                              \end{cases}$$
    \item The case $\bsk=(0,k)$, $k\neq 0$. This case can be treated analogously as the previous one and yields the same result. One has to use that $\gcd(p_n,q_n)=1$, which is a well known fact from the theory of continued  fractions. Therefore 
        $$ \sum_{h=0}^{q_n-1} \exp\left(\pm 2 \pi \icomp k\frac{h p_n}{q_n}\right)=\begin{cases}
                               q_n & \text{if $k\equiv 0 \pmod{q_n}$,} \\
                               0 & \text{otherwise.}
                              \end{cases}$$
    \item The case $\bsk=(k_1,k_2)$, where $k_1,k_2\neq 0$ and $k_1\equiv 0 \pmod{q_n}$, but $k_2\not\equiv 0 \pmod{q_n}$. In this case we find
    $$ \sum_{\substack{\bsk =(k_1,k_2)\in \ZZ^2\setminus\{\bszero\} \\k_1\equiv 0 \pmod{q_n} \\ k_2\not\equiv 0 \pmod{q_n} }}^{\infty} \frac{1}{r(\bsk)^2} \underbrace{\left| \sum_{h=0}^{q_n-1} \exp\left(2 \pi \icomp k_2\frac{h p_n}{q_n}\right)\right|^2}_{=0}=0. $$
     \item The case $\bsk=(k_1,k_2)$, where $k_1,k_2\neq 0$ and $k_2\equiv 0 \pmod{q_n}$, but $k_1\not\equiv 0 \pmod{q_n}$ can be treated analogously as the previous one and yields the same result.
         \item The case $\bsk=(k_1,k_2)$, where $k_1,k_2\neq 0$ and $k_1\equiv 0 \pmod{q_n}$ as well as $k_2\equiv 0 \pmod{q_n}$.
         In this case we find
         \begin{align*}
            \sum_{\substack{\bsk =(k_1,k_2)\in \ZZ^2\setminus\{\bszero\} \\k_1\equiv 0 \pmod{q_n} \\ k_2\equiv 0 \pmod{q_n} }}^{\infty} \frac{q_n^2}{r(\bsk)^2}=&q_n^2 \left(\frac{6}{4\pi^2}\right)^2
            \sum_{l_1,l_2\in\ZZ^*} \frac{1}{(q_nl_1)^2(q_n l_2)^2}\\
            =&
            \frac{1}{q_n^2}\left(\frac{6}{4\pi^2}\right)^2\left(2\frac{\pi^2}{6}\right)^2=\frac{1}{4q_n^2}.
         \end{align*}
 \item The case $\bsk=(k_1,k_2)$, where $k_1,k_2\neq 0$ and $k_1\not\equiv 0 \pmod{q_n}$ as well as $k_2\not\equiv 0 \pmod{q_n}$.
         In this case we have to evaluate the sum
         $$ q_n^2\sum_{\substack{k_1,k_2\in\ZZ^* \\ k_1,k_2\not\equiv 0 \pmod{q_n} \\ k_1+k_2p_n\equiv 0 \pmod{q_n}}} \frac{1}{r(\bsk)^2}, $$
which equals
 $$ q_n^2\left(\frac{6}{4\pi^2}\right)^2\sum_{\substack{k_1,k_2\in\ZZ^* \\ k_1,k_2\not\equiv 0 \pmod{q_n} \\ k_1+k_2p_n\equiv 0 \pmod{q_n}}}\frac{1}{k_1^2k_2^2}=\frac{9}{4q_n^2}\sum_{r=1}^{q_n-1}\frac{1}{\sin^2{\left(\frac{\pi r}{q_n}\right)}\sin^2{\left(\frac{\pi rp_n}{q_n}\right)}} $$
by Lemma~\ref{lemmabil}.
\end{itemize}
The result on $(L_{2,q_n}^{{\rm per}}(\mathcal{L}_n(\alpha)))^2$ follows. \\

Finally it remains to prove the result for the extreme $L_2$ discrepancy of $\mathcal{L}_n(\alpha)$. Recall from Remark~\ref{2dwarnock} that the extreme $L_2$ discrepancy of a point set $\cP=\{(x_h,y_h)\ : \ h=0,1,\dots,N-1\}$ can be calculated via the formula
\begin{align} \label{warnex}
  (L_{2,N}^{\mathrm{extr}}(\cP))^2=&\frac{N^2}{144}-\frac{N}{2}\sum_{h=0}^{N-1} f(x_h)f(y_h)+\frac14\sum_{h,l=0}^{N-1}g(x_h,x_l)g(y_h,y_l),
\end{align}
where we define $f(x):=x(1-x)$ and $g(x,y)=x+y-2xy-|x-y|$. We compute the Fourier series of these two functions. Let $\widehat{f}(k)$ and $\widehat{g}(k_1,k_2)$ for $k,k_1,k_2\in\ZZ$ be the Fourier coefficients
of $f$ and $g$; i.e.
$$\widehat{f}(k)=\int_0^1 f(x)\exp(-2\pi\icomp k x)\rd x$$
and
$$ \widehat{g}(k_1,k_2)=\int_0^1 \int_0^1 g(x,y)\exp(-2\pi\icomp (k_1 x+k_2 y))\rd x\rd y. $$
It is not difficult to find that $\widehat{f}(0)=\frac16$ and $\widehat{f}(k)=-\frac{1}{2\pi^2 k^2}$ for $k\in\ZZ^*$. Therefore  
$$ f(x)=\frac16-\sum_{k\in\ZZ^*}\frac{\exp(-2\pi\icomp kx)}{2\pi^2 k^2}=\sum_{k\in\ZZ^*}\frac{1-\exp(-2\pi\icomp kx)}{2\pi^2 k^2}. $$
For the function $g$ we find
$$ \widehat{g}(k_1,k_2)=\begin{cases}
                   \frac16 & \text{if $k_1=k_2=0$}, \\
                   -\frac{1}{2\pi^2k_1^2} & \text{if $k_1\in\ZZ^*$ and $k_2=0$}, \\
                   -\frac{1}{2\pi^2k_2^2} & \text{if $k_1= 0$ and $k_2\in\ZZ^*$}, \\
                   \frac{1}{2\pi^2k_1^2} & \text{if $k_1\in\ZZ^*$ and $k_2=-k_1$}, \\
                   0 & \text{otherwise.}
                \end{cases}$$
Therefore
\begin{align*}
    g(x,y)=& \frac16-\sum_{k_1\in\ZZ^*} \frac{\exp(-2\pi\icomp k_1x)}{2\pi^2 k_1^2}-\sum_{k_2\in\ZZ^*} \frac{\exp(-2\pi\icomp k_2y)}{2\pi^2 k_2^2}\\
    & +\sum_{k_1\in\ZZ^*} \frac{\exp(-2\pi\icomp k_1x)\exp(2\pi\icomp k_1y)}{2\pi^2 k_1^2} \\
    =&\sum_{k\in\ZZ^*} \frac{1}{2\pi^2 k^2}-\sum_{k\in\ZZ^*} \frac{\exp(-2\pi\icomp kx)}{2\pi^2 k^2}-\sum_{k\in\ZZ^*} \frac{\exp(2\pi\icomp ky)}{2\pi^2 k^2}\\
    &+\sum_{k\in\ZZ^*} \frac{\exp(-2\pi\icomp kx)\exp(2\pi\icomp ky)}{2\pi^2 k^2} \\
    =& \sum_{k\in\ZZ^*} \frac{(1-\exp(-2\pi\icomp kx))(1-\exp(2\pi\icomp ky))}{2\pi^2 k^2}.
\end{align*}

We insert the Fourier expansions of $f$ and $g$ into equation~\eqref{warnex} and obtain after some simplifications
\begin{eqnarray*}
 \lefteqn{(L_{2,N}^{\mathrm{extr}}(\cP))^2}\\
 & = &\frac{N^2}{144}\\
 && -\frac{N}{2}\sum_{k_1,k_2\in\ZZ^*} \frac{1}{4\pi^4k_1^2k_2^2}\sum_{h=0}^{N-1} (1-\exp(-2\pi\icomp k_1 x_h))(1-\exp(-2\pi\icomp k_2 y_h))\\
  &&+\frac14\sum_{k_1,k_2\in\ZZ^*} \frac{1}{4\pi^4k_1^2k_2^2}\left|\sum_{h=0}^{N-1} (1-\exp(-2\pi\icomp k_1 x_h))(1-\exp(-2\pi\icomp k_2 y_h))\right|^2.
\end{eqnarray*}
In order to find the exact formula for $L_{2,q_n}^{\mathrm{extr}}(\mathcal{L}_n(\alpha))$, we need to investigate the expression
  $$ \Sigma_{k_1,k_2}:=\sum_{h=0}^{q_n-1} \left(1-\exp\left(-2\pi\icomp k_1 \frac{h}{q_n}\right)\right)\left(1-\exp\left(-2\pi\icomp k_2 \frac{hp_n}{q_n}\right)\right) $$
  for non-zero integers $k_1$ and $k_2$. We observe that $\Sigma_{k_1,k_2}$ can have the following values:
  $$ \Sigma_{k_1,k_2}=\begin{cases}
              q_n & \text{if $k_1, k_2\not\equiv 0 \pmod{q_n}$ and $k_1+k_2p_n\not\equiv 0 \pmod{q_n}$},  \\
              2q_n & \text{if $k_1, k_2\not\equiv 0 \pmod{q_n}$ and $k_1+k_2p_n\equiv 0 \pmod{q_n}$}, \\
              0 & \text{otherwise.}
                      \end{cases}$$
This leads to
\begin{eqnarray*}
\lefteqn{(L_{2,q_n}^{\mathrm{extr}}(\mathcal{L}_n(\alpha)))^2}\\
& =&\frac{q_n^2}{144}\\
& & -\frac{q_n}{2} \left(\sum_{\substack{k_1,k_2\in\ZZ^* \\ k_1,k_2\not\equiv 0 \pmod{q_n} \\ k_1+k_2p_n\not\equiv 0 \pmod{q_n}}}\frac{q_n}{4\pi^4k_1^2k_2^2}+\sum_{\substack{k_1,k_2\in\ZZ^* \\ k_1,k_2\not\equiv 0 \pmod{q_n} \\ k_1+k_2p_n\equiv 0 \pmod{q_n}}}\frac{2q_n}{4\pi^4k_1^2k_2^2}\right)\\
& &+ \frac14\left(\sum_{\substack{k_1,k_2\in\ZZ^* \\ k_1,k_2\not\equiv 0 \pmod{q_n} \\ k_1+k_2p_n\not\equiv 0 \pmod{q_n}}}\frac{q_n^2}{4\pi^4k_1^2k_2^2}+\sum_{\substack{k_1,k_2\in\ZZ^* \\ k_1,k_2\not\equiv 0 \pmod{q_n} \\ k_1+k_2p_n\equiv 0 \pmod{q_n}}}\frac{4q_n^2}{4\pi^4k_1^2k_2^2}\right) \\
 & =&\frac{q_n^2}{144}-\frac{q_n^2}{16\pi^4}\sum_{\substack{k_1,k_2\in\ZZ^* \\ k_1,k_2\not\equiv 0 \pmod{q_n} \\ k_1+k_2p_n\not\equiv 0 \pmod{q_n}}}\frac{1}{k_1^2k_2^2}.
\end{eqnarray*}
We have
  \begin{equation} \label{lastequ}
      \sum_{\substack{k_1,k_2\in\ZZ^* \\ k_1,k_2\not\equiv 0 \pmod{q_n} \\ k_1+k_2p_n\not\equiv 0 \pmod{q_n}}}\frac{1}{k_1^2k_2^2}=\sum_{\substack{k_1,k_2\in\ZZ^* \\ k_1,k_2\not\equiv 0 \pmod{q_n}}}\frac{1}{k_1^2k_2^2}-\sum_{\substack{k_1,k_2\in\ZZ^* \\ k_1,k_2\not\equiv 0 \pmod{q_n} \\ k_1+k_2p_n\equiv 0 \pmod{q_n}}}\frac{1}{k_1^2k_2^2}.
  \end{equation} 
For the first sum on the right hand side we find
\begin{align*}
\sum_{\substack{k_1,k_2\in\ZZ^* \\ k_1,k_2\not\equiv 0 \pmod{q_n}}}\frac{1}{k_1^2k_2^2} = & \left(\sum_{\substack{k\in\ZZ^* \\ k\not\equiv 0 \pmod{q_n}}}\frac{1}{k^2}\right)^2\\
= & \left(\sum_{k\in\ZZ^*}\frac{1}{k^2}-\sum_{k\in\ZZ^*}\frac{1}{(kq_n)^2}\right)^2\\
= & \frac{\pi^4}{9}\left(1-\frac{1}{q_n^2}\right)^2.
\end{align*}
The value of the second sum in~\eqref{lastequ} is known by Lemma~\ref{lemmabil}. Now the result follows.
\end{proof}

\subsection*{Acknowledgements}
The authors are supported by the Austrian Science Fund (FWF), Projects F5513-N26 (Hinrichs) and F5509-N26 (Kritzinger and Pillichshammer), which are parts of the Special Research Program ``Quasi-Monte Carlo Methods: Theory and Applications''.

We are grateful to an anonymous referee for several comments.


\normalsize

\end{document}